\documentclass[a4paper,10pt]{article}

\usepackage{amsmath}
\usepackage{german, ngerman}
\usepackage[german]{babel}
\usepackage[latin1]{inputenc}
\usepackage{graphics}
\usepackage{verbatim}
\usepackage{listings}
\usepackage{dsfont}
\usepackage{amsmath}
\usepackage{amsfonts}
\usepackage{amssymb}
\usepackage{graphicx}
\usepackage{geometry}
\usepackage{graphics}
\usepackage{url}
\usepackage{framed}
\usepackage{color}
\usepackage{textcomp}

\newtheorem{set2}{Satz}[section]
\newtheorem{theorem}[set2]{Theorem}

\newtheorem{definition}[set2]{Definition}

\newtheorem{lemma}[set2]{Lemma}
\newtheorem{notation[set2]}{Notation}

\newtheorem{proposition}[set2]{Proposition}

\newcommand{\ep}{\hfill{$\blacksquare$}}
\newenvironment{proof}[1][Proof]{\textbf{#1.} }{ \ \\}

\def\XXint#1#2#3{{\setbox0=\hbox{$#1{#2#3}{\int}$}
\vcenter{\hbox{$#2#3$}}\kern-.5\wd0}}

\newcommand{\R}{\mathbb{R}}

\newcommand{\C}{\mathcal}
\newcommand{\ol}{\overline}

\newcommand{\eps}{\varepsilon}
\newcommand{\epsk}{ {\varepsilon_k}}
\newcommand{\be}{\begin{equation*}}
\newcommand{\ee}{\end{equation*}}
\newcommand{\bes}{\begin{equation*}}
\newcommand{\ees}{\end{equation*}}

\newcommand{\iot}{\int_{\Omega_T}}
\newenvironment{new}{\color{ddcyan}}{}
\newcommand{\ben}{\begin{new}}
\newcommand{\enn}{\end{new}}

\setlength{\topmargin}{1.7cm}
\setlength{\textheight} {20.7cm} 
\allowdisplaybreaks

\definecolor{ddmagenta}{rgb}{0.7,0,1.0}
\definecolor{ddcyan}{rgb}{0,0.1,1.0}
\definecolor{dred}{rgb}{.8,0,0}
\definecolor{ddgreen}{rgb}{0,0.4,0.4}
\definecolor{dgreen}{rgb}{0,0.4,0.2}

\begin{document}

\begin{center}
  {\bf {\Large Modeling and analysis of a phase field system \\
      for damage and phase separation processes\\ 
      in solids\footnote{This project is supported by the DFG Research Center  
        ``Mathematics for Key Technologies''   Matheon in Berlin (Germany) and
        by GNAMPA (Indam, Italy). }\\}}
  \medskip
  Elena BONETTI\footnote{Dipartimento di Matematica F. Casorati - Universit\`a di Pavia
    via Ferrata 1, I-27100 Pavia, Italy, E-Mail: elena.bonetti@unipv.it,
    antonio.segatti@unipv.it. }, 
  Christian HEINEMANN\footnote{Weierstrass Institute for Applied Analysis and
    Stochastics, Mohrenstrasse 39, 10117 Berlin, Germany, E-Mail:
    {christian.heinemann@wias-berlin.de}, {christiane.kraus@wias-berlin.de}.}, Christiane KRAUS$^3$ and Antonio
  SEGATTI$^2$ 
\end{center}
{\it AMS Subject classifications}
35K85, 
35K55, 
49J40, 
49S05, 
35J50,  
74A45,	
74G25, 
34A12,  
82B26,  
82C26,  
35K92,   
35K65,  
35K35;  
\\[2mm]
{\it Keywords:} {Cahn-Hilliard system, phase separation, elliptic-parabolic systems, doubly nonlinear
  differential inclusions, complete damage, existence results, 
  energetic solutions, weak solutions,  linear elasticity, rate-dependent systems.  \\[4mm]
}
\begin{center}
  {\bf Abstract}
\end{center}
In this work, we analytically investigate a multi-component system for
describing phase separation and damage processes in solids. 
The model consists of a parabolic diffusion equation of fourth order for the 
concentration coupled with an elliptic system with material dependent
coefficients for the strain tensor and a doubly nonlinear differential
inclusion for the damage function. The main aim of this paper is to show
existence of weak solutions for the introduced model, where, in contrast to 
existing damage models in the literature, different elastic properties of
damaged and undamaged material are regarded. To prove existence of 
weak solutions for the introduced model, we start with an approximation
system. Then, by passing to the limit, existence results of weak solutions 
for the proposed model are obtained via suitable variational techniques.

\section{Introduction}
The ongoing miniaturization in the area of micro-electronics leads to higher demands on
strength and lifetime of the materials, while the structural size is
continuously being reduced. Materials, which enable the functionality 
of technical products, change the microstructure over time. 
Phase separation, coarsening phenomena and damage processes take
place. The complete failure of electronic  devices like motherboards or mobile
phones often results from micro--cracks in solder joints.  Therefore, the
knowledge of the mechanisms inducing phase separation, coarsening and damage 
phenomena is of great importance for technological applications. A uniform
distribution of the original materials is aimed to guarantee evenly
distributed material properties of the sample. For instance, mechanical
properties, such as the strength and the stability of the material, depend on 
how finely regions of the original materials are mixed. The control of the evolution of the
microstructure  and, therefore, of the lifetime of materials
relies on the ability to understand phase separation, coarsening and
damage  processes. Hence, a major aim is to develop reliable mathematical 
models for describing such effects. 

Phase separation and coarsening phenomena are usually described by 
phase--field models of Cahn--Hilliard type. The evolution is
modeled by  a parabolic diffusion equation for the phase fractions.  
To include elastic effects, resulting from stresses caused by different
elastic properties of the phases, Cahn-Hilliard systems are coupled 
with an elliptic equation, describing quasi-static balance of  
forces. Such coupled Cahn-Hilliard systems with elasticity are also called
Cahn-Larch\'e systems.  Since in general the mobility,
stiffness and surface tension coefficients depend on the phases (see 
for instance \cite{BDM07} and \cite{BDDM09} for the explicit structure deduced by the embedded atom
method), the mathematical analysis of the coupled problem is very
complex. Existence results were derived for special cases in \cite{GarckeHabil,Carrive00,Pawlow} 
(constant mobility, stiffness and surface tension coefficients), 
in \cite{Bonetti02} (concentration dependent mobility,  two
space dimensions), \cite{SP12, SP13}  (concentration dependent surface
tension and nonlinear diffusion) and in \cite{Pawlow08} in an abstract measure-valued
setting (concentration dependent mobility and surface tension tensors).  
For numerical results and simulations we refer e.g. to \cite{Wei02,Mer05,BM2010}.

From a microscopic point of view, damage behavior originates from breaking atomic links in the material
whereas a macroscopic theory may specify damage in the isotropic case by a
scalar variable related to the proportion of damaged bonds in the
micro-structure of the material with respect to the undamaged ones.  
According to the latter perspective, phase-field models are quite
common to model smooth transitions between damaged and undamaged
material states. Such phase-field models have been mainly
investigated for incomplete damage which means that damaged material cannot 
loose all its elastic energy. 

A first local in time existence result for a 3D damage  model has been introduced in \cite{BS04},
where irreversibility of the damage evolution is accounted for. 
Damage for viscoelastic materials, in which also viscosity degenerates during
the damage process,  is investigated  in
\cite{BSS05}.  Damage models are also 
analytically investigated in \cite{MT10, KRZ11} and, there, existence,
uniqueness and regularity properties are shown.  These models do not account
for temperature effect. A local in time existence result for a complete
dissipative damage model with the evolving of temperature can be found in
\cite{BB}.  A {\it  coupled} system describing incomplete {\it damage}, 
linear elasticity and {\it phase separation} appeared
firstly in \cite{HK10a, HK10b}.  There, existence of weak solutions has 
been proven under mild assumptions, where, for instance, the stiffness 
tensor may be material-dependent and the chemical free energy may be 
of polynomial or logarithmic type. 
All these works are based on the gradient-of-damage model proposed by
Fr\'emond and Nedjar \cite{FN96} (see also \cite{Fre02}) which
describes damage as a result from microscopic movements in the solid.
The distinction between a balance law for the microscopic forces and
constitutive relations of the material yield a satisfying derivation
of an evolution law for the damage propagation from the physical point of view.
In particular, the gradient of the damage variable enters the resulting equations
and serves as a regularization term for the mathematical analysis as well as it ensures the structural size effect.
Internal constraints are ensured by the presence of non-smooth operators
(subdifferential operators) in the evolution equations. Hence, in the case that the evolution of the damage is assumed to be uni-directional, i.e. the
damage process is irreversible,
the microforce balance law becomes a doubly-nonlinear differential inclusion. 

For a non-gradient approach of damage models for brittle materials we refer to
\cite{FG06,GL09, Bab11}. There, the damage variable $z$ takes on two distinct values,
i.e. $\{0,1\}$, in contrast to the gradient approach, where $z \in [0,1]$. In addition,
the mechanical properties  are described in \cite{FG06, GL09, Bab11} differently. They 
choose a $z$-mixture of a linearly elastic strong and weak material
with two different elasticity tensors. A non-gradient  model for incomplete
damage in the framework of Young measures is considered in \cite{FKS12}.

Damage modeling is an active field in the engineering community since the
1970s. We do not actually detail literature.  For some recent works we refer to \cite{Car86, NPO94, Mie95,
  MK00, MS01, Fre02, LD05, Gee07, VSL11}. A variational 
approach to fracture and crack propagation models can be found for
instance in  \cite{ BFM08, CFM09, CFM10, Neg10, LT11}. 

The reason why incomplete damage models are more feasible for mathematical investigations is that
a coercivity assumption on the elastic energy prevents the material
from a complete degeneration. Typically, the following form is chosen:
\begin{equation}
\label{eq:incomp-phi1}
W^\mathrm{el}(e,z)= \frac{1}{2} (\Phi(z) + \delta) \, {\mathbb C} e: e,
\qquad \delta >0 \, \text{ small},
\end{equation}
where $\Phi:[0,1] \to \R_+$ is a continuous and monotonically increasing
function with $\Phi(0)=0$. The symbol ${\mathbb C}$ denotes the stiffness
tensor and $e$ is the strain tensor. 

Dropping $\delta>0$ in \eqref{eq:incomp-phi1} may lead to serious troubles.
However, in the case of viscoelastic materials, the inertia terms
circumvent this kind of problem in the sense that
the deformation field still exists on the whole domain accompanied with a loss 
of spatial regularity (cf. \cite{RR12}). Unfortunately, this result cannot be
expected in the case of quasi-static
mechanical equilibrium (see for instance \cite{BMR09}). Mathematical works dealing
with complete models and covering global-in-time existence are rare and are mainly
focused on purely \textit{rate-independent systems} \cite{Mielke06, BMR09,
  Mielke10, Mie11} by using $\Gamma$-convergence techniques to recover
energetic properties in the limit. Very recently, global-in-time existence results are also
obtained for \textit{rate-dependent systems} in \cite{HK12, HK13b} by considering the damage
process on a time-dependent domain. Alternatively, in \cite{BFS13} the problem
of understanding complete damage is tackled using some defect measures
which are conjectured to concentrate on the (complete) damaged portions 
of the material. This theoretical prediction is supported by numerical simulations.

The main aim of this work is to show existence of weak solutions of  
a unified model for phase separation and damage processes,  where, 
in contrast to the existing incomplete damage models in the literature
\cite{MT10, KRZ11, HK10a, HK10b} or local in time damage evolution 
\cite{BS04,BSS05}, different elastic properties of 
damaged and undamaged material are regarded. 
More precisely, we choose an elastic energy density
${W}^\mathrm{el}$ of the form 
\begin{equation} \label{eq:W1}
    W^\mathrm{el}(e,c,z) =  \Phi(z) \, {W}_1^\mathrm{el}(e,c)   +
  \big(1- \Phi(z) \big)   {W}_2^\mathrm{el}(e,c)   ,
\end{equation} 
where $\Phi:[0,1] \to [0,1]$ is a continuously differentiable and monotonically
increasing function with $\Phi(0)=\Phi'(0)=0$, $ \Phi(1)=1$, $W_1^\mathrm{el} \ge
W_2^\mathrm{el} $ and $c$ is the concentration field. This means that for undamaged material 
the elastic energy density $W_1^\mathrm{el}$ is stored, whereas in the
completely damaged case $z=0$ the energy $W_2^\mathrm{el}$ is stored. For the elastic energy 
$W_1^\mathrm{el}$ we assume an $H^1$-coercivity condition for $u$ 
and for  $W_2^\mathrm{el}$ a weaker $W^{1,p}$-coercivity condition, $1<p<2$. 

Our highly nonlinear model covers the intermediate case between incomplete and complete 
damage which takes care for different deformation properties of damaged 
and undamaged material. It consists of a parabolic diffusion equation of fourth order 
for the concentration coupled with an elliptic system with material dependent
coefficients for the strain tensor and a doubly nonlinear differential
inclusion for the damage function, see Definition ($S_0$) on page \pageref{page:system}.

The paper is organized as follows: In Section \ref{sec:model}, we 
start with introducing the model formally and stating the notation and
assumptions. Then, we  introduce an appropriate notion of weak solutions 
for our introduced system in Subsection \ref{section:weakFormulation}. 
To handle the differential inclusion rigorously, we adapt the 
concept of weak solutions which has been proposed in \cite{HK10a} 
for phase separation systems coupled with rate-dependent damage 
processes. The main result is stated in Subsection \ref{sec:main}.  Section
\ref{sec:proof} is devoted to the existence proof of the proposed model. 
The proof is based on an approximation-a priori estimates-passage to
the limit procedure. In particular, the limit analysis relies on the monotone
structure of the system.

\section{Modeling \label{sec:model}}
We consider an $N$-component alloy occupying a bounded Lipschitz domain
$\Omega\subseteq\mathbb R^3$. To account for deformation,  phase separation and
damage processes, a state of the system at a fixed time point is specified by the triple $q=(u,c,z)$.
The displacement field $u:\Omega\rightarrow\mathbb R^3$ determines the current position $x+u(x)$ of an
undeformed material point $x$.
Throughout this paper, we will work with the linearized strain tensor $e(u)=\frac 12(\nabla u+(\nabla u)^T)$,
which is an adequate assumption only when small strains occur in the
material. However, this
assumption is justified for phase separation processes in alloys since the deformation usually has a
small gradient.
The vector-valued function $c:\Omega\rightarrow\mathbb R^N$ describes the
chemical concentration of the $N$-components, which satisfies the normalized
condition $\sum_{j=1}^N c_j=1$ in $\Omega$. To
account for damage effects, we choose a scalar damage
variable $z:\Omega\rightarrow\mathbb R$, which models the 
reduction of the effective volume of the material due to void
nucleation, growth, and coalescence.  The damage process is
modeled unidirectional, i.e.~damage may only
increase. In particular, self-healing processes in the material are
forbidden. No damage at a material point $x \in \Omega$ is described by
$z(x)=1$, whereas $z(x)=0$ stands for a completely damaged material point $x
\in \Omega$. 

\subsection{Energies and evolutionary equations}
Here, we qualify our model formally and postpone a rigorous
treatment to Section \ref{section:weakFormulation}.
The presented model is based on two functionals, i.e.~a
generalized Ginzburg-Landau free energy functional $\mathcal E$ and a damage
pseudo-dissipation potential $\mathcal R$  
(in the sense by Moreau). The free energy density $\varphi$ of 
the system is given by
\begin{equation}
  \label{eq:free_energy}  
  \varphi(e(u),c,\nabla c,z,\nabla z):= \frac{\gamma}{2} \mathbf\Gamma\nabla
  c:\nabla c   +\frac{\delta}{2}|\nabla
  z|^2+W^\mathrm{ch}(c)+W^\mathrm{el}(e(u),c,z),\qquad\gamma,\delta>0,
\end{equation}
where the gradient terms  penalize spatial changes of the
variables $c$ and $z$, $W^\mathrm{ch}$ denotes the chemical energy density and
$W_\mathrm{el}$ is the elastically stored energy density
accounting for elastic deformations and damage effects. 
For simplicity of notation, we set $\gamma=\delta=1$.

The {\it chemical free energy density} $W^\mathrm{ch}$ depends
on temperature, which is convex above a critical temperature
value and non-convex below. Therefore, if an alloy is cooled down
below the critical temperature, spinodal decomposition and
coarsening phenomena occur due to the several local minimizers
of  $W^\mathrm{ch}$. We assume that the chemical energy is of 
polynomial type. More precisely, we need the assumptions
\eqref{eqn:growthEst7c}-\eqref{eqn:growthEst8}
of Section \ref{section:AssRes} for a rigorous treatment. 

The {\it elastically stored energy density} 
${W}^\mathrm{el}_1$ in \eqref{eq:W1} due to stresses and strains, which occur
in the material, is typically of quadratic form, i.e.  
\begin{equation}
\label{eq1:incomp-phi1}
  {W}^\mathrm{el}_1(e(u),c) = 
  \frac{1}{2} \big(e(u) - e^*(c)\big) : \mathbb C(c) \big(e(u) -e^*(c)\big). 
\end{equation}
Here, $e^*(c)$ denotes the {\it eigenstrain}, which is usually linear 
in $c$, and  $\mathbb C(c)\in\mathcal L(\mathbb R_\mathrm{sym}^{n\times n})$ 
is a fourth order stiffness tensor, which may depend on the concentration.
The stiffness tensor is assumed to be symmetric and positive definite. 
Note that we are not restricted to homogeneous elasticity.  

To incorporate the effect of damage on the elastic response of
the material, we choose an elastic energy density
${W}^\mathrm{el}$ of the form \eqref{eq:W1}, i.e. 
\begin{equation}
  \label{eq:Wel}
  W^\mathrm{el}(e(u),c,z) =  \Phi(z) \, {W}_1^\mathrm{el}(e(u),c)   +
  \big(1- \Phi(z) \big)   {W}_2^\mathrm{el}(e(u),c)   ,
\end{equation} 
where $\Phi:[0,1] \to \R_+$ is a continuously differentiable and monotonically
increasing function with $\Phi(0)=\Phi'(0)=0$, $\Phi(1)=1$ and $W_1^\mathrm{el} \ge
W_2 ^\mathrm{el} $. This means that in the undamaged case the material 
accumulates the elastic energy density $W_1^\mathrm{el}$, whereas in the
completely damaged case only the lower energy $W_2^\mathrm{el}$ is stored. 
Hence, in particular, different elastic properties of damaged and undamaged 
material can be modeled. 

We assume that $W_1^\mathrm{el}$ is of quadratic growth in $e$, whereas 
$W_2^\mathrm{el}$ only has to satisfy a lower $p$-growth condition,  $1 < p < 2$.
This means that the displacement field for damaged material only need to be 
an element of $L^p(\Omega)$, $1 < p < 2$. The complete growth conditions 
for $W^\mathrm{el}$ can be found in Section \ref{section:AssRes}. \\[2mm]
The overall free energy $\mathcal E$ of Ginzburg-Landau type
has the following structure:
\begin{equation}
  \label{eqn:EnergyTyp1}
  \begin{split}
    &\mathcal E(u,c,z):=\tilde{\mathcal E}(u,c,z)+\int_\Omega I_{[0,\infty)}(z)\,\mathrm dx,\\
      &\tilde{\mathcal E}(u,c,z):=\int_\Omega \varphi(e(u),c,\nabla c,z,\nabla z)\,\mathrm dx.
  \end{split}
\end{equation}
Here, $ I_{[0,\infty)}$ signifies the indicator function of the subset 
$[0,\infty)\subseteq\mathbb R$, i.e. $ I_{[0,\infty)}(x)=0$ for $x \in
      [0,\infty)$ and  $ I_{[0,\infty)}(x) =\infty$ for $(-\infty,0)$. 

We assume that the energy dissipation for the damage process is triggered by a 
dissipation potential $\mathcal R$ of the form
\begin{equation}
  \label{eqn:EnergyTyp2}
  \begin{split}
    &\mathcal R(\dot z):=\tilde{\mathcal R}(\dot z)
    +\int_\Omega I_{(-\infty,0]}(\dot z)\,\mathrm dx,\\
	&\tilde{\mathcal R}(\dot z):=\int_\Omega
        \Big(-\alpha\dot z+\frac 12\beta\dot z^2 \Big)\,\mathrm dx
	\, \, \text{ for } \alpha \ge0\text{ and }\beta>0.
  \end{split}
\end{equation}
Due to $\beta>0$, the dissipation potential is referred to as
\textit{rate-dependent}. In the case $\beta=0$, which is not
considered in this work, $\mathcal R$ is called \textit{rate-independent}. 
We refer for rate-independent processes to \cite{Efendiev06, Mielke99,
  Mielke06, Mielke10, Roubicek10} 
and in particular to  \cite{Mielke05} for a survey.

The governing evolutionary equations for a system state $q=(u,c,z)$ can be
expressed by virtue of the functionals \eqref{eqn:EnergyTyp1} and
\eqref{eqn:EnergyTyp2}. The evolution is driven by the following
elliptic-parabolic system of differential equations and differential inclusion: 
\label{page:system}
\begin{align} 
  \left.
  \begin{cases} 
    \textit{Diffusion}:\hspace{2.9cm}\partial_t c &= \mathrm{div} (\mathbb M  \nabla  w )\\
    \hspace{4.7cm} w&=\mathbb P \big((-\mathrm{div}(\mathbf\Gamma\nabla c)
    +W_{,c}^\mathrm{ch}(c)+W_{,c}^\mathrm{el}(e(u),c,z) \big)\\
    \textit{Balance of forces}: \hspace{1.35cm} \mathrm{div} \, \sigma &=f \\
    \textit{Damage evolution}:\hspace{1.8cm}  0 & \in \partial_z\mathcal
    E(u,c,z)+\partial_{\dot z}\mathcal R(\partial_t z)
  \end{cases}
  \right \}
  \label{eqn:unifyingModelClassical}
  \tag{{$S_0$}}
\end{align}
where $\sigma=\sigma(e,c,z):=\partial_e \varphi(e,c,\nabla c,z,\nabla z)$ 
denotes the Cauchy stress tensor, $w$ is the chemical potential given by
$w=w(u,c,z):=\partial_c\varphi(e,c,\nabla c,z,\nabla z)
-\mathrm{div}(\partial_{\nabla c}\varphi(e,c,\nabla c,z,\nabla z))$ 
and $-f$ stands for the exterior volume force applied to the body.
The matrix $\mathbb P$  denotes the orthogonal projection of $\R^N$ onto the 
tangent space $T\Sigma=\big\{x\in\mathbb R^N\,|\,\sum_{k=1}^N x_k=0\big\}$
of the affine plane $\Sigma:=\big\{x\in\mathbb R^N\,\big|\,\sum_{l=1}^N
x_k=1\big\}$. The diffusion equation is a fourth order quasi-linear parabolic equation of
Cahn-Hilliard type and models phase separation processes for the concentration
$c$ while the balance of forces is described by an elliptic equation  for
$u$. The doubly nonlinear differential inclusion specifies the flow rule of the
damage profile according to the constraints $0\leq z\leq 1$ and $\partial_t z\leq 0$ (in space and time). Actually,
we have $z\leq 1$ combining the two constraints   $z\geq0$ and $\partial_t
z\leq 0$ (irreversible damage), once the initial datum is lower than 1. 
The inclusion has to be read in terms of generalized subdifferentials.

We need to impose some restrictions on the mobility matrix $\mathbb
M$.  We assume that  $\mathbb M$ is symmetric and positive definite on
the tangent space $T\Sigma$. In addition, due to the constraint
$\sum_{k=1}^N c_k=1$, $\mathbb M$ has to satisfy the property
$\sum_{l=1}^N \mathbb M_{kl}=0$ for all $k=1,\ldots,N$. Note, that
$\mathbb M= \mathbb M \, \mathbb P$. The gradient tensor $\Gamma$ 
is assumed to be symmetric and positive definite. 

Let $D\subset\partial \Omega$ with  $\mathcal
H^{n-1}(D)>0$ ($\mathcal H^n$: $n$-dimensional
Hausdorff measure) denote the portion of the boundary $\partial\Omega$
on which we prescribe Dirichlet boundary conditions. We set
$ D_T:=(0,T)\times D$ and $ (\partial\Omega)_T:=(0,T)\times
\partial\Omega$.
The initial-boundary conditions of our system are summarized as follows:\\[1mm]
\textit{Initial conditions}
\begin{align*} \label{eqn:unifyingModelIBC} 
  & c(0) =c^0  \text{ a.e. in }\Omega \quad \text{ and }  \quad
  c^0\in \Sigma  \text{ a.e. in }\Omega, \\[-3mm]
  & \\[-3mm]
  &0     \le z(0)=z^0 \le 1 \text{ a.e. in }\Omega.\\[-2mm] 
  \tag{IBC} \\[-6mm]
  \intertext{\textit{Boundary conditions}}   \\[-8mm]
  & u=b \text{ on } D_T, \quad \sigma\cdot\overrightarrow{\nu}=0 \text{ on }(\partial\Omega)_T
  \setminus D_T. \\
  & \nabla z\cdot \overrightarrow{\nu}=0\text{ on }(\partial\Omega)_T, \quad 
  \mathbf\Gamma\nabla c\cdot\overrightarrow{\nu}=0\text{ on }(\partial\Omega)_T, \qquad 
  \mathbb M\nabla w\cdot\overrightarrow{\nu}=0 \text{ on }(\partial\Omega)_T, \\[1mm]
  & \text{where $\overrightarrow{\nu}$} \text{stands for the unit normal on $\partial\Omega$ pointing outward and
    $b$ is the boundary value}  \\[-1mm]
  & \text{function on  the Dirichlet boundary $D$, which can be suitably extended to a function on
    $\ol{\Omega_T}$.}  
\end{align*}
To show existence of weak solutions for the system
\eqref{eqn:unifyingModelClassical}, we first consider a regularized version 
for the displacement field: \\[2mm]
{\it Regularized energy} 
\begin{align*}
  &\tilde{\mathcal E}_\varepsilon(u,c,z):=
  \int_\Omega  \Big( \frac{1}{2}\mathbf\Gamma\nabla c:\nabla c
  +\frac{1}{2}|\nabla z|^2+W^{\mathrm{ch}}(c)+W^\mathrm{el}(e,c,z)
  +\frac \varepsilon4|\nabla u|^4  \Big) \,\mathrm dx,\\
  &\mathcal E_\varepsilon(u,c,z):=\tilde{\mathcal E}_\varepsilon(u,c,z)+\int_\Omega I_{[0,\infty)}(z)\,\mathrm dx.
\end{align*}
{\it Evolution system}
\begin{align} 
  \! \! \!   \! \! \!   \left.
  \begin{cases} 
    \textit{Diffusion}:\hspace{3.7cm}\partial_t c & \! \! \!  \!=
    \mathrm{div} (\mathbb M  \nabla  w )\\
    \hspace{5.5cm} w&\!\! \!  \!=\mathbb P(-\mathrm{div}(\mathbf\Gamma\nabla
    c)+W_{,c}^\mathrm{ch}(c)+W_{,c}^\mathrm{el}(e(u),c,z))\\
    \textit{Balance of forces}: \hspace{0.15cm} \mathrm{div} \sigma +
    \varepsilon\mathrm{div}(|\nabla u|^2\nabla u)&\! \!  \!\!=f \\
    \textit{Damage evolution}:\hspace{2.7cm}  0 &\! \!  \! \! \in \partial_z\mathcal
    E_\eps(u,c,z)+\partial_{\dot z}\mathcal R(\partial_t z) 
  \end{cases}\! 
  \right \}
  \label{eqn:regUnifyingModelClassical}
  \tag{{$S_\eps$}}
\end{align}
{\it Initial-boundary conditions} 
\begin{align}
  \text{\eqref{eqn:unifyingModelIBC} with }(\sigma+\varepsilon|\nabla u|^2\nabla u)\cdot\overrightarrow{\nu}=0 \quad 
  \text{ instead of } \quad \sigma\cdot\overrightarrow{\nu}=0   \quad \text{on }(\partial\Omega)_T.
  \label{eqn:regUnifyingModelIBC}
  \tag{IBC$_\varepsilon$}
\end{align}
\subsection{Notation}
The notation, we will use throughout this paper, is collected in the
following list.\\[2mm]
\textit{Spaces and sets.}\vspace*{0.4em}\\
\begin{tabular}{p{6em}l}
  $W^{1,r}(\Omega;\mathbb R^n)$ & standard Sobolev space\\
\end{tabular}\\
\begin{tabular}{p{6em}l}
  $W^{1,r}_+(\Omega)$ &functions of $W^{1,r}(\Omega)$ which are non-negative almost everywhere\vspace{0.2em}\\
\end{tabular}\\
\begin{tabular}{p{6em}l}
  $W^{1,r}_-(\Omega)$ &functions of $W^{1,r}(\Omega)$ which are non-positive almost everywhere\vspace{0.2em}\\
\end{tabular}\\
\begin{tabular}{p{6em}l}
  $W^{1,r}_D(\Omega;\mathbb R^n)$ &functions of $W^{1,r}(\Omega;\mathbb R^n)$ which vanish
  on $D\subseteq\partial\Omega$ in the sense of traces\vspace{0.2em}\\
\end{tabular}\\
\begin{tabular}{p{6em}l}
  $G_T$ & $(0,T)\times G$\vspace{0.2em}\\
\end{tabular}\\
\begin{tabular}{p{6em}l}
  $\R_+$ & $\{ x \in \R: x  \ge 0 \}$
\end{tabular} \\[5mm]
\textit{Functions, operations and measures.}\vspace*{0.4em}\\
\begin{tabular}{p{6em}l}
  $I_M$ & indicator function of a subset $M\subseteq X$\vspace{0.2em}\\
\end{tabular}\\
\begin{tabular}{p{6em}l}
  $W_{,e}$ &classical partial derivative of a function $W$ with respect to the variable $e$\vspace{0.2em}\\
\end{tabular}\\
\begin{tabular}{p{6em}l}
  $\langle g^\star,f\rangle$ &dual pairing of $g^\star\in (W^{1,r}(\Omega;\R^n))^\star$ and $f\in W^{1,r}(\Omega;\R^n)$\vspace{0.2em}\\
\end{tabular}\\
\begin{tabular}{p{6em}l}
  $\mathrm d E$ &G\^{a}teaux differential of $E$\vspace{0.2em}\\
\end{tabular}\\
\begin{tabular}{p{6em}l}
  $p^\star$ &Sobolev critical exponent $\frac{np}{n-p}$ for $n>p$\vspace{0.2em}\\
\end{tabular}\\
\begin{tabular}{p{6em}l}
  $\mathcal H^n$ &Hausdorff measure of dimension $n$\vspace{0.2em}\\
\end{tabular}\\
\begin{tabular}{p{6em}l}
  $\mathcal L^n$ &Lebesgue measure of dimension $n$
\end{tabular}\\
\begin{tabular}{p{6em}l}
  $\Sigma$ & $\big\{x\in\mathbb R^N\,\big|\,\sum_{l=1}^N x_k=1\big\}$
\end{tabular} \\ 
\begin{tabular}{p{6em}l}
  $T\Sigma$ & $\big\{x\in\mathbb R^N\,\big|\,\sum_{l=1}^N x_k=0\big\}$
\end{tabular}\\

\subsection{Assumptions  \label{section:AssRes}}
The general setting, the growth assumptions and the assumptions on the
coefficient tensors which are mandatory for the existence theorem are summarized below.
\begin{enumerate}
  \renewcommand{\labelenumi}{(\roman{enumi})}
\item \textit{Setting}
  \begin{align*}
    \begin{aligned}
      &\text{Space dimension}\hspace{4em}&&n \in \mathbb{N},\hspace{22.5em}\\
      &\text{Components in the alloy} &&N\in\mathbb N\text{ with }N\geq 2,\\
      &\text{Regularization exponent} && 1 < p < 2,\\
      &\text{Conjugate exponent} && p'=\frac{p}{p-1},\\
      & \text{Growth exponent } && s < \frac{n(p-1)}{n-p} ,\\
      &\text{Viscosity factors} &&\alpha,\beta>0,\\
      &\text{Domain} &&\Omega\subseteq\mathbb R^n\text{ bounded Lipschitz domain,}\\
      &\text{Dirichlet boundary} &&D\subseteq\partial\Omega\text{ with }\mathcal H^{n-1}(D)>0,\\
      &\text{Time interval} &&[0,T]\text{ with }T>0,\\
      &\text{External volume force} &&  
      f  \in W^{1,1} (0,T; L^{p'}(\Omega;\R^n)) \text{
        with  } f(0)=f^0 \in L^{p'}(\Omega;\R^n) ,\\
      &\text{Constant} &&  C > 0 \text{ (context dependent)}
    \end{aligned}
  \end{align*}
\item \textit{Energy densities}
  \begin{align}
    & \Phi \in \C C^1([0,1];[0,1])  \text{ monotonically
      increasing with}  \notag \\ &\Phi(0)=\Phi'(0)=0 \text{ and }  \Phi(1)=1\notag .\\
    \text{Elastic energy density } W_1^\mathrm{el} \qquad
    & W_1^\mathrm{el} \in \C  C^1( \R^{n \times n}\times \R^N; \R) \text{ with} \notag\\	
    \label{eqn:W1growthEst1} \tag{A1}
    &W_1^\mathrm{el}(e,c)=W_1^\mathrm{el}(e^t,c),\\
    \tag{A2}
    \label{eqn:W1growthEst2a}
    &|W_1^\mathrm{el}(e,c)|\leq C
    (|e|^2+|c|^{ 2}+1), \\
    \tag{A3}
    \label{eqn:W1growthEst3}
    &C |e_1-e_2|^2\leq
    \big(W_{1,e}^\mathrm{el}(e_1,c)-W_{1,e}^\mathrm{el}(e_2,c)
    \big):(e_1-e_2),\\
    \tag{A4}
    \label{eqn:W1growthEst4}
    &|W_{1,e}^\mathrm{el}(e_1+e_2,c)|\leq
    C(W_1^\mathrm{el}(e_1,c)+ { |e_2|}+1),\\
    \tag{A5}  \label{eqn:W1growthEst5a}
    &|W_{1,c}^\mathrm{el}(e,c)|\leq C (|e|^2+|c|^{2}+1) \\
    &\text{for any } e_1, e_2 \in  \R^{n \times n}_\text{sym} \text{ and } c \in \Sigma, \notag\\
    \label{eqn:W1growthEst4a} \notag
    &  h_c(\cdot)   = W_{1,e}^\mathrm{el}(\cdot,c) - W_{1,e}^\mathrm{el}(0,c)
    \text{ is positively}\\ 
    & \text{$1$-homogeneous, i.e.} \notag\\
    \tag{A6} &    h_c(\lambda e) =  \lambda h_c(e)
    \text{ for any $\lambda >0$ and all $e \in \R^{n \times n}_\text{sym}$.}  \notag \\
    \text{Elastic energy density } W_2^\mathrm{el} \qquad
    &W_2^\mathrm{el}\in \C C^1(\mathbb R^{n\times n}\times\mathbb R^N ;\mathbb R)\text{ with}\notag\\
    \tag{A7} \label{eqn:growthEst1}   &W_2^\mathrm{el}(e,c)=W_2^\mathrm{el}(e^t,c),\\
    \tag{A8} \label{eqn:growthEst2}   &W_2^\mathrm{el}(e,c)\le W_1^\mathrm{el}(e,c),\\	
    &|W_2^\mathrm{el}(e,c)|\leq C  (|e|^p+|c|^{ s}+1), \tag{A9} \label{eqn:growthEst2a}\\
    \tag{A10}  \label{eqn:growthEst3}
    &C |e_1-e_2|^p  \leq (W_{2,e}^\mathrm{el}(e_1,c)-W_{2,e}^\mathrm{el}(e_2,c))\!:\!(e_1-e_2),\\
    \tag{A11}
    \label{eqn:growthEst4} &|W_{2,e}^\mathrm{el}(e_1+e_2,c)|\leq C(W_2^\mathrm{el}(e_1,c)+ { |e_2|^{p-1}}+1),\\
    \tag{A12} \label{eqn:growthEst5a}
    &|W_{2,c}^\mathrm{el}(e,c)|\leq C (|e|^p+|c|^{s}+1)\\
    \notag
    &\text{for any } e_1, e_2 \in  \R^{n \times  n}_\text{sym} \text{ and } c \in \Sigma.
    \notag\\
    \text{Chemical energy
      density} \qquad \, \, \, \, &W^\mathrm{ch}
    \in \C C^1(\mathbb R^N;\mathbb R)\text{ with }W^\mathrm{ch}\geq -C, 
    \tag{A13} \label{eqn:growthEst7c}\\
    \tag{A14}
    \label{eqn:growthEst8}
    &|W_{,c}^\mathrm{ch}(c)|\leq
    C(|c|^{2^\star/2}+1)\\
    &\text{for any } c \in \Sigma.
    \notag
  \end{align}
\item \textit{Tensors}
  \begin{align*}
    \begin{aligned}
      &\text{Mobility tensor\hspace{3.7em}}
      &&\mathbb M\in\mathbb R^{N\times N}\text{
        symmetric and positive definite on }T\Sigma\text{ and }\hspace{1.0em}\\
      &&&\sum_{l=1}^N \mathbb M_{kl}=0\text{ for all }k=1,\ldots,N.\\
      &\text{Energy gradient tensor} &&
      \mathbf\Gamma\in\mathcal L(\mathbb R^{N\times n};\mathbb
      R^{N\times n})\text{ constant,  symmetric and positive definite}\\
      &&&\text{fourth order tensor.}
    \end{aligned}
  \end{align*}
\end{enumerate}
Note that \eqref{eqn:W1growthEst3}, \eqref{eqn:W1growthEst4},
\eqref{eqn:growthEst3} and \eqref{eqn:growthEst4} imply 
the growth conditions 
\begin{equation}
\begin{split}
\label{eq:cons_ass}
W_1(e,c) \ge C_1 |e|^2 -  C_2(|c|^4 + 1) \qquad \text{and} \qquad 
W_2^\mathrm{el}(e,c) \ge C_1 |e|^p
-  C_2(|c|^{sp'} + 1) 
\end{split}
\end{equation}
for all $c \in \Sigma$ and $ e \in \R^{n \times
                                    n}_\text{sym}$.

Let us point out that the above properties are satisfied in the case we choose 
$W_1$ as in \eqref{eq1:incomp-phi1} and for $W_2$ we may take, for instance, 
 $$W^\mathrm{el}_2(c,e(u))= \frac{1}{2} \big( 
(e(u) -  \hat{e}(c)): 
\hat{\mathbb {C} }(c)
(e(u)- {\hat{e}(c)}) 
\big)^{p/2} -C, \, \, \, \,1 < p < 2, $$
where  $C\ge 0$ is some constant. 
\subsection{Weak formulation  \label{section:weakFormulation} }

In this subsection, we state the notion of weak solutions for our proposed
system and its regularized version. We use the concept of weak solutions introduced in
\cite{HK10a} which consists of an energy inequality
and a variational inequality for the doubly nonlinear differential inclusion.

The next Proposition (see \cite{HK10a, HK10b})
collects the basic properties of this concept of weak solution. In particular, note
that the sole condition ($ii$) is weaker than the usual 
variational inequality that characterizes the doubly nonlinear inclusion $(i)$. 
\begin{proposition}
  \label{prop:energeticFormulation}
Let $(u,c,w,z)\in \C C^2(\Omega_T;\mathbb R^n\times\mathbb R^N\times\mathbb
R^N\times\mathbb R)$ satisfy the diffusion equation and the equation of
balance of forces of $(S_0)$  with initial-boundary
conditions \eqref{eqn:unifyingModelIBC}.
Then the following two properties are equivalent for all $t\in[0,T]$:
\begin{itemize}
\item[(i)]
  $0 \in \partial_z\mathcal E(u(t),c(t),z(t))+\partial_{\dot z}\mathcal R(\dot z(t))$, 
\item[(ii)]
  Energy inequality
  \begin{align*}
    &\mathcal E(u(t),c(t),z(t))
    +\int_{0}^{t}\langle\mathrm d_{\dot z}\tilde{\mathcal R}(\partial_t z),\partial_t z\rangle\,\mathrm ds
    +\int_{\Omega_t} \mathcal \nabla w : {\mathbb M} \nabla w \,\mathrm dx
    \mathrm  ds - \int_{\Omega} f(t) \cdot u(t) \,\mathrm dx\mathrm dx
    \notag\\
    &\qquad\qquad\leq
    \mathcal
    E(u(0),c(0),z(0))+\int_{\Omega_t}W_{,e}^\mathrm{el}(e(u),c,z):e(\partial_t
    b)\,\mathrm dx\mathrm ds
    - \int_{\Omega_t}  \partial_t f \cdot u \,\mathrm dx\mathrm ds \\
    & \hspace{10.8cm}-\int_{\Omega} f(0) \cdot u(0) \,\mathrm dx
  \end{align*}
  and the variational inequality
  \begin{align}
    \label{eq:r}
    0\leq \left\langle \mathrm d_z\tilde{\mathcal E}(u(t),c(t),z(t))+r(t)+\mathrm d_{\dot z}
    \tilde{\mathcal R}(\partial_t z(t)),\zeta\right\rangle
  \end{align}
  for all $\zeta \in H_-^1(\Omega)\cap L^\infty(\Omega)$ and $r(t)\in \partial
  I(H_+^1(\Omega)\cap L^\infty(\Omega)
  ;z(t))$. 
\end{itemize}
Note that if one of the two properties are satisfied then we even obtain the
equation of balance of  energy:
\begin{align*}
  &\mathcal E(u(t),c(t),z(t))
  +\int_{0}^{t}\langle\mathrm d_{\dot z}\tilde{\mathcal R}(\partial_t z),\partial_t z\rangle\,\mathrm ds
  +\int_{\Omega_t} \mathcal \nabla w : {\mathbb M} \nabla w \,\mathrm dx
  \mathrm  ds - \int_{\Omega} f(t) \cdot u(t) \,\mathrm dx
  \notag\\
  & \quad=  \mathcal
  E(u(0),c(0),z(0))+\int_{\Omega_t}W_{,e}^\mathrm{el}(e(u),c,z):e(\partial_t b)\,\mathrm dx\mathrm ds
  - \int_{\Omega_t}  \partial_t f \cdot u \,\mathrm dx\mathrm ds - \int_{\Omega} f(0) \cdot u(0) \,\mathrm dx
\end{align*}
\end{proposition}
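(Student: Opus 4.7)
The strategy is to reduce both implications to a single chain-rule identity that isolates the damage contribution from the $(u,c)$-power balance already encoded in the diffusion equation and the balance of forces. Concretely, differentiating $\tilde{\mathcal E}(u(t),c(t),z(t))$ in time, integrating the gradient terms by parts via the Neumann conditions on $\mathbf\Gamma\nabla c$ and $\nabla z$, inserting the diffusion equation $\partial_t c=\mathrm{div}(\mathbb M\nabla w)$ together with $\mathbb M\nabla w\cdot\overrightarrow{\nu}=0$, testing the balance of forces against the admissible variation $\partial_t u-\partial_t b$ (which vanishes on $D$, with $\sigma\cdot\overrightarrow{\nu}=0$ on the complement), and performing one integration by parts in time in $\int_\Omega f\cdot u\dx$, one checks that the energy equation stated in the proposition is equivalent to the scalar identity
\begin{equation*}
\int_0^t\langle\mathrm d_z\tilde{\mathcal E}(u,c,z)+\mathrm d_{\dot z}\tilde{\mathcal R}(\partial_t z),\partial_t z\rangle\ds=0,
\end{equation*}
while the corresponding energy \emph{inequality} is equivalent to the same quantity being $\le 0$. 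This reduction is what makes both directions tractable.

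\textbf{(i)$\Rightarrow$(ii).} From (i) one extracts, at each time, pointwise multipliers $r\in\partial I_{[0,\infty)}(z)$ and $s\in\partial I_{(-\infty,0]}(\partial_t z)$ with $\mathrm d_z\tilde{\mathcal E}+r+\mathrm d_{\dot z}\tilde{\mathcal R}+s=0$. Pairing with any $\zeta\in H^1_-(\Omega)\cap L^\infty(\Omega)$ gives $\langle\mathrm d_z\tilde{\mathcal E}+r+\mathrm d_{\dot z}\tilde{\mathcal R},\zeta\rangle=-\int_\Omega s\zeta\dx\ge 0$ (because $s\ge 0$ and $\zeta\le 0$), which is the variational inequality \eqref{eq:r}. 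Pairing instead with $\partial_t z$, the two cross terms vanish: $r\partial_t z=0$ pointwise (since $r=0$ where $z>0$, while wherever $z=0$ the constraints $z\ge 0$ and $\partial_t z\le 0$ force $\partial_t z=0$), and $s\partial_t z=0$ by definition of $\partial I_{(-\infty,0]}$. Hence $\langle\mathrm d_z\tilde{\mathcal E}+\mathrm d_{\dot z}\tilde{\mathcal R},\partial_t z\rangle=0$, which via the preparatory identity produces the energy equality.

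\textbf{(ii)$\Rightarrow$(i).} From the variational inequality with the $r$ provided there, testing against arbitrary non-positive $\zeta$ yields $\mathrm d_z\tilde{\mathcal E}+r+\mathrm d_{\dot z}\tilde{\mathcal R}\le 0$ a.e., so $s:=-(\mathrm d_z\tilde{\mathcal E}+r+\mathrm d_{\dot z}\tilde{\mathcal R})\ge 0$; only the complementarity $s\partial_t z=0$ remains. The energy inequality combined with the preparatory identity gives $\int_0^t\langle\mathrm d_z\tilde{\mathcal E}+\mathrm d_{\dot z}\tilde{\mathcal R},\partial_t z\rangle\ds\le 0$; adding $\int_0^t\int_\Omega r\partial_t z\dx\ds=0$ (same complementarity on $r$) turns it into $\int_0^t\int_\Omega(-s)\partial_t z\dx\ds\le 0$. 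Since $-s\le 0$ and $\partial_t z\le 0$ make the integrand pointwise $\ge 0$, it must vanish a.e., so $s\partial_t z=0$. Hence $s\in\partial I_{(-\infty,0]}(\partial_t z)$, (i) follows, and the inequality saturates into the stated energy equation.

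The main obstacle is the complementarity $r\partial_t z=0$, which couples three separate constraints---unidirectionality $\partial_t z\le 0$, the physical bound $z\ge 0$, and the support property $r=0$ on $\{z>0\}$---and is the single place where the two indicator-function subdifferentials genuinely interact. Once this identity is in hand, everything else is routine integration-by-parts bookkeeping with the stated boundary conditions.
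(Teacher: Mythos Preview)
The paper does not actually prove Proposition~\ref{prop:energeticFormulation}; it merely states the result and refers the reader to \cite{HK10a, HK10b} for the argument. So there is no in-paper proof to compare against, and your proposal must be judged on its own merits.

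Your argument is correct and is the standard one for results of this type. The reduction via the chain rule---using the diffusion equation to convert the $c$-contribution into the dissipation term $\int\nabla w:\mathbb M\nabla w$, and testing the balance of forces with $\partial_t u-\partial_t b$ followed by an integration by parts in time for the forcing---isolates precisely the damage term $\int_0^t\langle\mathrm d_z\tilde{\mathcal E}+\mathrm d_{\dot z}\tilde{\mathcal R},\partial_t z\rangle\ds$, and both directions then hinge on the two complementarity relations $r\,\partial_t z=0$ and $s\,\partial_t z=0$. Your justification of $r\,\partial_t z=0$ (namely: $r$ vanishes where $z>0$, while at points where $z=0$ the combination of $z\ge 0$, $\partial_t z\le 0$ and the assumed $C^2$-regularity forces $\partial_t z=0$) is exactly the mechanism that makes the argument work, and you correctly flag it as the crux. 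One small remark: in the \mbox{(ii)$\Rightarrow$(i)} direction you silently use that the abstract subdifferential $\partial I(H^1_+\cap L^\infty;z(t))$ is represented by a nonpositive function supported on $\{z=0\}$; this identification is unproblematic in the $C^2$ setting but is worth stating, since it is what lets you transfer the complementarity argument from (i)$\Rightarrow$(ii) over to the given $r$. With that caveat, the proof is complete and matches what one finds in the cited references.
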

We would like to emphasize that the statement of Proposition
\ref{prop:energeticFormulation} is also true for the diffusion 
equation and the equation of balance of forces $(S_\eps)$ with initial-boundary
conditions \eqref{eqn:regUnifyingModelIBC} if we replace $ \mathcal E$ by $ \mathcal E_\eps$.

\begin{definition}[Weak solutions for the regularized system $\boldsymbol{ \eqref{eqn:regUnifyingModelClassical} }$]
  \label{def:weakSolutionRegularized}
  A quadruple $q_\eps=(u_\eps,c_\eps,$ $w_\eps,z_\eps)$ is called a weak solution of the regularized system
  \eqref{eqn:regUnifyingModelClassical} with the initial-boundary conditions
  \eqref{eqn:regUnifyingModelIBC} if the following properties are satisfied:
  \begin{enumerate}
    \renewcommand{\labelenumi}{(\roman{enumi})}
  \item
    Spaces \\
    The components of $q_\eps$ are in the following spaces:
    \begin{align*}
      &u_\eps \in  L^\infty(0,T;W^{1,4}(\Omega;\mathbb R^n)),\;u_\eps|_{D_T}=b|_{D_T},\\
      &c_\eps\in L^\infty(0,T;H^1(\Omega;\mathbb R^N))\cap
      H^1(0,T;(H^1(\Omega;\mathbb R^N))') ,\; c_\eps\in\Sigma
      \text{ a.e. in }\Omega_T,\\
      &z_\eps\in L^\infty(0,T;H_+^1(\Omega))\cap H^1(0,T;L^2(\Omega)), \,
      z_\eps(0)=z^0, \\[-2mm]
      \intertext{and} \\[-10mm]  
      &w_\eps\in L^2(0,T;H^1(\Omega;\mathbb R^N)).
    \end{align*}
    
  \item Diffusion \\
    For all $\zeta \in H^1(\Omega;\mathbb R^N)$ and for a.e. $t \in [0,T]$:
    \begin{align} 
      \label{eqn:regular1}
      &\int_{\Omega_T} \partial_t c_\eps (t)\cdot \zeta\,\mathrm dx \,  \mathrm dt
      =\int_{\Omega_T} \mathbb{M} \nabla w_\eps(t) :  \nabla \zeta \,  \mathrm dx
      \, \mathrm dt
    \end{align}
    For all $\zeta\in H^1(\Omega;\mathbb R^N)$ and for a.e. $t\in[0,T]$:	
    \begin{equation}
      \label{eqn:regular2}
      \begin{split}
	\int_{\Omega} w_\eps(t)\cdot\zeta\,\mathrm dx
	=\int_{\Omega}  \Big(\mathbb P\mathbf\Gamma\nabla c_\eps(t):\nabla\zeta
	+\mathbb P W_{,c}^\mathrm{ch}&(c_\eps(t))\cdot\zeta  \Big)\,\mathrm dx\\
	&+\int_{\Omega}\mathbb P
        W_{,c}^\mathrm{el}(e(u_\eps(t)),c_\eps(t),z_\eps(t))
        \cdot\zeta\,\mathrm dx 
      \end{split}
    \end{equation}
  \item Balance of forces\\
    For all $\zeta\in W_D^{1,4}(\Omega;\mathbb R^n)$ and for a.e. $t\in[0,T]$:
    \begin{equation}
      \label{eqn:regular3}
      \int_{\Omega} W_{,e}^\mathrm{el}(e(u_\eps(t)),c_\eps(t),z_\eps(t)):e(\zeta)\,\mathrm
      dx+ \eps \int_{\Omega}  |\nabla u_\eps(t)|^2 \nabla u_\eps(t) : \nabla \zeta  \,\mathrm dx
      = 	\int_{\Omega} f(t) \cdot \zeta  \,\mathrm dx
    \end{equation}
  \item Damage variational inequality\\
    For all $\zeta\in H_-^1(\Omega)$ and for a.e. $t\in[0,T]$:
    \begin{align}
      \label{eqn:regular4}
      0 & \le \int_{\Omega}  \big( \nabla z_\eps(t)\cdot\nabla\zeta
      +(W_{,z}^\mathrm{el}(e(u_\eps(t)),c_\eps(t),z_\eps(t))
      -\alpha+\beta(\partial_t z_\eps(t)))\zeta  \big)\,\mathrm dx ,\\
      0 & \le z_\eps(t),\\
      0 & \ge \partial_t z_\eps(t).
    \end{align}
  \item Energy inequality\\
    For a.e. $t\in[0,T]$:
    \begin{equation}
      \label{eqn:regular5}
      \begin{split}
        &\! \! \! \! \! \!  \mathcal E_\eps(u_\eps(t),c_\eps(t),z_\eps(t))
	+\int_\Omega \alpha (z^0-z_\eps(t))\,\mathrm dx+\int_{\Omega_t}
        \beta |\partial_t z_\eps|^2\,\mathrm dx\mathrm ds
	+ \int_{\Omega_t}    \nabla w_\eps :  \mathbb{M}  \nabla w_\eps
        \mathrm dx \mathrm ds \\ & \hspace{10.6cm}
        -\int_{\Omega} f(t) \cdot u_\eps(t) \,\mathrm dx\\
	& \leq \mathcal E_\eps(u_\eps^0,c^0,z^0)+
	\int_{\Omega_t}W_{,e}^\mathrm{el}(e(u_\eps),c_\eps,z_\eps):e(\partial_t b) \,\mathrm dx\mathrm ds+ \eps
        \int_{\Omega_t} |\nabla u_\eps|^2 \nabla u_\eps : \nabla \partial_t
        b \,\mathrm dx\mathrm ds\\ & \hspace{7.6cm}
        - \int_{\Omega_t}  \partial_t f \cdot u_\eps \,\mathrm dx\mathrm ds
        - \int_{\Omega} f(0) \cdot u_\eps(0) \,\mathrm dx,
      \end{split}
    \end{equation}
    where $u_\eps^0$ is the unique minimizer of $\mathcal
    E_\eps( \, \ldotp ,c^0,z^0) - \int_{\Omega} f(0) \cdot (\, \ldotp)\,  \mathrm dx  $ 
    in $W^{1,4}(\Omega;\mathbb R^n)$ with trace $u^0|_D=b(0)|_D$.
  \end{enumerate}
\end{definition}
Note that we can choose $r=0$ in \eqref{eq:r} due to $\Phi(0)=\Phi'(0)=0$, see
Lemma 3.7 and Remark 3.8 in \cite{HK10b} for details. 

\begin{definition}[Weak solution for the limit system 
$\boldsymbol{\eqref{eqn:unifyingModelClassical}}$]
  \label{def:weakSolutionLimit}
  A quadruple $q=(u,c,w,z)$ is called a weak solution of the system
  \eqref{eqn:unifyingModelClassical} with the initial-boundary conditions
  \eqref{eqn:unifyingModelIBC} if the following properties are satisfied:
  \begin{enumerate}
    \renewcommand{\labelenumi}{(\roman{enumi})}
    \item
      Spaces \\ 
      The components of $q$ are in the following spaces:
      \begin{align*}
	&u \in  L^\infty(0,T;W^{1,p}(\Omega;\mathbb R^n)),\;u|_{D_T}=b|_{D_T},\\
	&c\in L^\infty(0,T;H^1(\Omega;\mathbb R^N))\cap H^1(0,T;(H^1(\Omega;\mathbb R^N))') ,\; c\in\Sigma\text{ a.e. in }\Omega_T,\\
	&z\in L^\infty(0,T;H_+^1(\Omega))\cap
        H^1(0,T;L^2(\Omega)), \; z(0)=z^0, \\[-2mm]
        \intertext{and} \\[-10mm]  &w\in L^2(0,T;H^1(\Omega;\mathbb R^N)).
      \end{align*}
      
    \item Diffusion \\
      For all $\zeta\in L^2(0,T;H^1(\Omega;\mathbb R^N))$ with
      $\partial_t\zeta\in L^2(\Omega_T;\mathbb R^N)$ and $\zeta(T)=0$:
      \begin{align}
        \label{eqn:limit1}
	&\int_{\Omega_T}(c-c^0)\cdot\partial_t\zeta\,\mathrm dx \,  \mathrm dt
	=\int_{\Omega_T}  \mathbb{M} \nabla w :  \nabla \zeta \,  \mathrm dx
        \, \mathrm dt
      \end{align}
      For all $\zeta\in H^1(\Omega;\mathbb R^N)\cap L^\infty(\Omega;\mathbb
      R^N)$ and for a.e. $t\in[0,T]$:	
      \begin{equation}
        \label{eqn:limit2}
	\begin{split}
	  \int_{\Omega} w(t)\cdot\zeta\,\mathrm dx
	  =&\int_{\Omega}  \big( \mathbb P\mathbf\Gamma\nabla c(t):\nabla\zeta
	  +\mathbb P W_{,c}^\mathrm{ch}(c(t))\cdot\zeta   \big)\,\mathrm dx\\
	  &+\int_{\Omega}\mathbb P W_{,c}^\mathrm{el}(e(u(t)),c(t),z(t))\cdot\zeta\,\mathrm dx
	\end{split}
      \end{equation}
    \item Balance of forces\\
      For all $\zeta\in W_D^{1,p}(\Omega;\mathbb R^n)$ and for a.e. $t\in[0,T]$:
      \begin{equation}
         \label{eqn:limit3}
	\int_{\Omega} W_{,e}^\mathrm{el}(e(u(t)),c(t),z(t)):e(\zeta) \,\mathrm dx
        = \int_\Omega f(t)  \cdot \zeta  \,\mathrm dx
      \end{equation}
    \item Damage variational inequality\\
      For all $\zeta\in H_-^1(\Omega)\cap L^\infty(\Omega)$ and for a.e. $t\in[0,T]$:
      \begin{align}
        \label{eqn:limit4}
	0 & \le \int_{\Omega}  \big( \nabla z(t)\cdot\nabla\zeta
	+(W_{,z}^\mathrm{el}(e(u(t)),c(t),z(t))
	-\alpha+\beta(\partial_t z(t)))\zeta \big)\,\mathrm dx , \\
       0 & \le z(t),\label{eqn:limit4a} \\
       0 & \ge \partial_t z(t).\label{eqn:limit4b}
      \end{align}
    \item Energy inequality\\
      For a.e. $t\in[0,T]$:
      \begin{equation} \label{eqn:limit5}
	\begin{split}
	  &\mathcal E(u(t),c(t),z(t))
	  +\int_\Omega \alpha (z^0-z(t))\,\mathrm dx+\int_{\Omega_t} \beta |\partial_t z|^2\,\mathrm dx\mathrm ds
	  + \int_{\Omega_t}   \nabla w : \mathbb{M} \nabla w \,\mathrm
          dx\mathrm ds \\ & \hspace{9.8cm}-
          \int_{\Omega} f(t) \cdot u(t) \,\mathrm dx   \\
	  &\quad \leq\mathcal E(u^0,c^0,z^0)+
	  \int_{\Omega_t}W_{,e}^\mathrm{el}(e(u),c,z):e(\partial_t b)\,\mathrm
          dx\mathrm ds - \int_{\Omega_t}  \partial_t f \cdot u \,\mathrm dx\mathrm ds - \int_{\Omega} f(0) \cdot u(0) \,\mathrm dx,
	\end{split}
      \end{equation}
      where $u^0$ is the unique minimizer of $\mathcal 
      E( \, \ldotp ,c^0,z^0) - \int_{\Omega} f(0) \cdot (\, \ldotp)\,  \mathrm
      dx $ in $W^{1,p}(\Omega;\mathbb R^n)$ with trace $u^0|_D=b(0)|_D$.
  \end{enumerate}
\end{definition}

Note that both notions of weak solutions imply mass conservation, i.e.
\begin{align*}
  \int_\Omega c(t)\,\mathrm dx\equiv const.
\end{align*}

\subsection{Main results \label{sec:main}}
The main result of this work is the following theorem.
\begin{theorem}[Existence theorem]
  \label{theorem:mainTheorem}
  Let the assumptions of Section \ref{section:AssRes} be satisfied. Then for every
  \begin{align*}
    &b\in W^{1,1}(0,T;W^{1,\infty}(\Omega;\mathbb R^n)),\\
    &f\in W^{1,1}(0,T;L^{p'}(\Omega;\mathbb R^n))\text{ with }
    f^0=f(0)\in L^{p'}(\Omega;\mathbb R^n) , \\
    &c^0\in H^1(\Omega;\mathbb R^N)\text{ with }c^0\in\Sigma\text{ a.e. in }\Omega,\\
    &z^0\in H^1(\Omega)\text{ with }0\leq z^0\leq 1\text{ a.e. in }\Omega,
  \end{align*}
  there exists a weak solution $q$ of the system \eqref{eqn:unifyingModelClassical}
  in the sense of Definition \ref{def:weakSolutionLimit} 
  with the initial-boundary conditions \eqref{eqn:unifyingModelIBC}.
\end{theorem}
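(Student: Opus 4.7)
The plan is to follow the three-step program announced in the introduction: first construct weak solutions $q_\eps$ of the regularized system $(S_\eps)$, then derive $\eps$-uniform a priori estimates from the energy inequality, and finally pass to the limit $\eps\to 0$ by exploiting the monotone structure encoded in (A3) and (A10). The purpose of the regularization $\frac{\eps}{4}|\nabla u|^4$ is precisely to turn the degenerate elasticity (caused by the fact that $W_2^\mathrm{el}$ only delivers $W^{1,p}$-coercivity with $p<2$) into a uniformly monotone, strictly convex $W^{1,4}$-coercive problem for each frozen $(c,z)$.

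For the construction of $q_\eps$ I would use a semi-implicit Rothe time discretization with step $\tau=T/M$. At each level $k+1$, given $(u_k,c_k,z_k)$, I solve three successive incremental problems: first, minimize $u\mapsto\tilde{\mathcal E}_\eps(u,c_k,z_k)-\int_\Omega f(t_{k+1})\cdot u\,\mathrm dx$ over $\{u\in W^{1,4}(\Omega;\R^n):u|_D=b(t_{k+1})|_D\}$ to produce $u_{k+1}$, existence and uniqueness following from strict convexity of the $\eps$-term and monotonicity of $W_{,e}^\mathrm{el}$; second, minimize the damage functional $z\mapsto\tilde{\mathcal E}_\eps(u_{k+1},c_k,z)+\tau\tilde{\mathcal R}((z-z_k)/\tau)$ over $\{z\in H^1_+:z\le z_k\}$, the constraint encoding irreversibility and yielding the discrete analogue of \eqref{eqn:regular4}; third, solve the discrete Cahn-Hilliard step for $(c_{k+1},w_{k+1})$ as a saddle-point problem in $H^1\times H^1$. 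Piecewise-constant and piecewise-affine interpolants satisfy a discrete energy identity, and Aubin-Lions compactness together with the monotonicity of $W_{,e}^\mathrm{el}$ yields $q_\eps$ in the sense of Definition \ref{def:weakSolutionRegularized} as $\tau\to 0$; this step is essentially an adaptation of the construction in \cite{HK10a,HK10b}.

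The uniform a priori estimates are derived from \eqref{eqn:regular5} via Young and Gronwall, the coercivity bound \eqref{eq:cons_ass}, and Korn/Poincaré on $W^{1,p}_D$, giving
\begin{align*}
&u_\eps\text{ bounded in }L^\infty(0,T;W^{1,p}(\Omega;\R^n)),\quad \eps^{1/4}\nabla u_\eps\text{ bounded in }L^\infty(0,T;L^4),\\
&c_\eps\text{ bounded in }L^\infty(0,T;H^1)\cap H^1(0,T;(H^1)'),\\
&z_\eps\text{ bounded in }L^\infty(0,T;H^1_+)\cap H^1(0,T;L^2),\quad 0\le z_\eps\le 1,\\
&w_\eps\text{ bounded in }L^2(0,T;H^1),
\end{align*}
where the last estimate follows from \eqref{eqn:regular2} tested against $w_\eps-\dashint_\Omega w_\eps$ together with (A12), (A14) and Sobolev embedding. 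Aubin-Lions then extracts a subsequence with $c_\eps\to c$ strongly in $L^2(0,T;L^q)$ for $q<2^\star$, $z_\eps\to z$ strongly in $C([0,T];L^2)$, $u_\eps\weaklim u$ in $L^\infty(0,T;W^{1,p})$, $\partial_t z_\eps\weaklim\partial_t z$ in $L^2(\Omega_T)$, and a.e.\ pointwise convergence of $(c_\eps,z_\eps)$.

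The main obstacle is identifying the weak limit of the nonlinear stress $A_\eps:=W_{,e}^\mathrm{el}(e(u_\eps),c_\eps,z_\eps)$, because $e(u_\eps)$ converges only weakly in $L^p$. The growth bound (A11) yields $A_\eps\weaklim A$ in $L^{p'}(\Omega_T)$, and the $\eps$-regularization term vanishes, since the identity $\|\eps|\nabla u_\eps|^2\nabla u_\eps\|_{L^{4/3}}=\eps\|\nabla u_\eps\|_{L^4}^3=\eps^{1/4}(\eps^{1/4}\|\nabla u_\eps\|_{L^4})^3$ together with the uniform bound on $\eps^{1/4}\nabla u_\eps$ forces this quantity to go to zero. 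To identify $A=W_{,e}^\mathrm{el}(e(u),c,z)$ I apply Minty's monotonicity trick: for every $\psi\in W^{1,p}_D(\Omega;\R^n)$ the inequality
\begin{equation*}
\int_{\Omega_T}\bigl(A_\eps-W_{,e}^\mathrm{el}(e(\psi),c_\eps,z_\eps)\bigr):\bigl(e(u_\eps)-e(\psi)\bigr)\,\mathrm dx\,\mathrm dt\ge 0
\end{equation*}
holds by (A3) and (A10), and $\limsup\int A_\eps:e(u_\eps)$ is controlled from above by testing \eqref{eqn:regular3} against $u_\eps-b$ and invoking \eqref{eqn:regular5}. Strong convergence of $(c_\eps,z_\eps)$ together with continuity of $W_{,e}^\mathrm{el}(e(\psi),\cdot,\cdot)$ handles the cross terms; choosing $\psi=u+\lambda\phi$ and sending $\lambda\to 0^\pm$ identifies $A$. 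The damage variational inequality \eqref{eqn:limit4} then transfers via the remark after Definition \ref{def:weakSolutionRegularized} (the assumption $\Phi(0)=\Phi'(0)=0$ allows the multiplier $r$ to be taken zero), the Cahn-Hilliard relations pass by linearity plus the identified convergences, and lower semicontinuity of the Ginzburg-Landau part combined with Fatou for the dissipation deliver \eqref{eqn:limit5}.
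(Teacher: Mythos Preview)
Your overall strategy is the same as the paper's --- regularize, estimate, extract, identify --- and your a~priori bounds and compactness list are correct. The gap is in the stress identification.

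You assert that ``the growth bound (A11) yields $A_\eps\weaklim A$ in $L^{p'}(\Omega_T)$'', but (A11) only controls $W_{2,e}^{\mathrm{el}}$. The full stress splits as $A_\eps=\Phi(z_\eps)W_{1,e}^{\mathrm{el}}(e(u_\eps),c_\eps)+(1-\Phi(z_\eps))W_{2,e}^{\mathrm{el}}(e(u_\eps),c_\eps)$, and for the first summand the only available bound comes from $\sqrt{\Phi(z_\eps)}\,e(u_\eps)\in L^\infty(0,T;L^2)$, which gives $\Phi(z_\eps)W_{1,e}^{\mathrm{el}}(e(u_\eps),c_\eps)$ merely in $L^\infty(0,T;L^2)$. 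Since $p<2$ forces $p'>2$, this summand is \emph{not} bounded in $L^{p'}$. More seriously, your Minty inequality with $\psi\in W^{1,p}_D$ is not well defined: the cross term $\Phi(z_\eps)W_{1,e}^{\mathrm{el}}(e(\psi),c_\eps):e(u_\eps)$ pairs two $L^p$ objects ($p<2$), and $\int|e(\psi)|\,|e(u_\eps)|$ need not be finite. Restricting to smoother $\psi$ does not help, because you still need to recover $\psi=u\in W^{1,p}$ at the end.

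The paper resolves this by treating the two summands separately and, for the $W_1$-part, exploiting the positive $1$-homogeneity assumption (A6) on $h_c(\cdot)=W_{1,e}^{\mathrm{el}}(\cdot,c)-W_{1,e}^{\mathrm{el}}(0,c)$. This gives the algebraic identity
\[
\sqrt{\Phi(z)}\,\bigl(W_{1,e}^{\mathrm{el}}(e,c)-W_{1,e}^{\mathrm{el}}(0,c)\bigr)
= W_{1,e}^{\mathrm{el}}\bigl(\sqrt{\Phi(z)}\,e,\,c\bigr)-W_{1,e}^{\mathrm{el}}(0,c),
\]
which converts the monotonicity inequality (A3) into one in the \emph{bounded} variable $\sqrt{\Phi(z_\eps)}\,e(u_\eps)\in L^2$. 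Coupled with the standard $L^{p'}$ monotonicity argument for the $W_2$-part and the force balance tested against $u_\eps-b$, this yields \emph{strong} convergence $\sqrt{\Phi(z_\eps)}\,e(u_\eps)\to\sqrt{\Phi(z)}\,e(u)$ in $L^2$ and $(1-\Phi(z_\eps))^{1/p}e(u_\eps)\to(1-\Phi(z))^{1/p}e(u)$ in $L^p$, hence $\nabla u_\eps\to\nabla u$ in $L^p$ and a.e. You never invoke (A6), and without it the argument does not close.

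This omission also undermines the remaining items: passing to the limit in $W_{,c}^{\mathrm{el}}$ in \eqref{eqn:limit2}, in $W_{,z}^{\mathrm{el}}$ in \eqref{eqn:limit4}, and in the boundary-work term $\int W_{,e}^{\mathrm{el}}(e(u_\eps),c_\eps,z_\eps):e(\partial_t b)$ on the right of \eqref{eqn:limit5} all require a.e.\ (or strong) convergence of $e(u_\eps)$, not just weak convergence of the stress. A bare Minty identification would not supply that.
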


\section{Existence of weak solutions of $\boldsymbol{\eqref{eqn:unifyingModelClassical}}$ \label{sec:proof}}
By slight modifications of the proof of Theorem 2.5 in \cite{HK10b}, we can establish the following existence theorem.
 \begin{theorem}[Existence theorem, cf. \cite{HK10b}]
   \label{theorem:regularizedExistence} 
   Let the assumptions of Section \ref{section:AssRes} be satisfied. Then for every
   \begin{align*}
     &b\in W^{1,1}(0,T;W^{1,\infty}(\Omega;\mathbb R^n)),\\
     &f\in W^{1,1}(0,T;L^{p'}(\Omega;\mathbb R^n))\text{ with } f^0=f(0)\in L^{p'}(\Omega;\mathbb R^n) , \\
     &c^0\in H^1(\Omega;\mathbb R^N)\text{ with }c^0\in\Sigma\text{ a.e. in }\Omega,\\
     &z^0\in H^1(\Omega)\text{ with }0\leq z^0\leq 1\text{ a.e. in }\Omega,
   \end{align*}
   there exists a weak solution $q_\eps$ of the regularized system \eqref{eqn:regUnifyingModelClassical}
   in the sense of Definition \ref{def:weakSolutionRegularized} 
   with the initial-boundary conditions \eqref{eqn:regUnifyingModelIBC}.
 \end{theorem}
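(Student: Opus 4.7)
The plan is to follow the scheme of \cite{HK10b}, whose adaptation to the present elastic energy $W^\mathrm{el}(e,c,z) = \Phi(z)W_1^\mathrm{el}(e,c) + (1-\Phi(z))W_2^\mathrm{el}(e,c)$ is almost mechanical thanks to the $\eps$-regularization $\tfrac{\eps}{4}|\nabla u|^4$, which ensures $u_\eps\in W^{1,4}(\Omega;\R^n)$ throughout and therefore sidesteps the low-regularity issues generated by the $p$-growth of $W_2^\mathrm{el}$. Throughout, $\eps>0$ is fixed; the strategy is a semi-implicit time discretization followed by a passage to the limit in the step size $\tau\to 0$.

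Fix $\tau = T/M$ and $t_m = m\tau$. Given iterates $(u^{m-1},c^{m-1},w^{m-1},z^{m-1})$, I would construct the new iterate by decoupling. First, compute $z^m$ as a minimizer of
\begin{equation*}
z \mapsto \mathcal E_\eps(u^{m-1},c^{m-1},z) + \tau \mathcal R\Bigl(\tfrac{z-z^{m-1}}{\tau}\Bigr),
\end{equation*}
whose existence is immediate from $H^1$-coercivity, convexity in $\nabla z$ and weak lower semicontinuity; the indicators in $\mathcal E_\eps$ and $\mathcal R$ encode $0\le z^m\le z^{m-1}$ a.e. Next, determine $u^m$ as the unique minimizer of
\begin{equation*}
u \mapsto \tilde{\mathcal E}_\eps(u,c^{m-1},z^m) - \int_\Omega f(t_m)\cdot u\,\dx
\end{equation*}
over $\{u\in W^{1,4}(\Omega;\R^n)\,:\,u|_D=b(t_m)|_D\}$, which is strictly convex and coercive by the $|\nabla u|^4$ term combined with the strong monotonicity \eqref{eqn:W1growthEst3}, \eqref{eqn:growthEst3}. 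Finally, solve the implicit Cahn--Hilliard step for $(c^m,w^m)$ with data $(u^m,z^m)$ via a standard saddle-point / Galerkin argument.

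Piecewise-constant and piecewise-affine interpolants $(u^\tau,c^\tau,w^\tau,z^\tau)$ then satisfy a discrete energy inequality, obtained by testing the three subproblems against $(z^m-z^{m-1})$, $(u^m-u^{m-1})$ (modulo a discretized boundary lift of $b$) and $w^m$ respectively, and summing over $m$. This furnishes, uniformly in $\tau$, all the bounds required in Definition \ref{def:weakSolutionRegularized}(i), so Banach--Alaoglu and Aubin--Lions yield weak and strong convergence along a subsequence, in particular a.e.\ convergence of $c^\tau$ and $z^\tau$.

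The main obstacle is passing to the limit in the nonlinear elastic terms $W^\mathrm{el}_{,e}$, $W^\mathrm{el}_{,c}$, $W^\mathrm{el}_{,z}$, for which I need strong convergence of $e(u^\tau)$ in $L^2(\Omega_T)$ and of $\nabla u^\tau$ in $L^4(\Omega_T)$. This would be obtained by a Minty-type monotonicity argument: subtracting the candidate limit balance of forces from the discrete one and testing with $u^\tau-u_\eps$, the monotonicity of $W^\mathrm{el}_{,e}(\cdot,c,z)$ (a convex combination, via $\Phi(z)\in[0,1]$, of the two monotone maps $W_{1,e}^\mathrm{el}$ and $W_{2,e}^\mathrm{el}$) together with the strict monotonicity of the $p$-Laplacian-type term $\eps|\nabla u|^2\nabla u$ gives
\begin{equation*}
\int_{\Omega_T}\bigl(|e(u^\tau)-e(u_\eps)|^2 + \eps\,|\nabla u^\tau-\nabla u_\eps|^4\bigr)\dx\dt\longrightarrow 0,
\end{equation*}
where the a.e.\ convergence of $(c^\tau,z^\tau)$ is used to handle the mixed arguments. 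With this strong convergence secured, the remaining passages are routine: the chemical potential relation, the diffusion equation and the balance of forces pass to their weak forms \eqref{eqn:regular1}--\eqref{eqn:regular3}; the damage inequality \eqref{eqn:regular4} survives via weak lower semicontinuity on one-sided test functions $\zeta\in H^1_-(\Omega)$; the irreversibility $\partial_t z_\eps\le 0$ and the obstacle $z_\eps\ge 0$ are preserved since both are closed under the weak convergence of $z^\tau$ in $H^1(0,T;L^2(\Omega))$; and the continuous energy inequality \eqref{eqn:regular5} follows from weak lower semicontinuity of the convex contributions to $\mathcal E_\eps$ combined with the strong convergence of the strain and the $L^4$-gradient.
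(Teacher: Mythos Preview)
Your proposal is correct and follows exactly the strategy the paper invokes: the paper itself does not prove this theorem but simply refers to Theorem~2.5 of \cite{HK10b}, and the time-discretization/energy-estimate/compactness/Minty scheme you sketch is precisely that argument, adapted mechanically to the present $W^\mathrm{el}$ thanks to the $\eps|\nabla u|^4$ regularization. One cosmetic point: the monotonicity of $W_{,e}^\mathrm{el}(\cdot,c,z)$ only yields $\Phi(z)\,|e(u^\tau)-e(u_\eps)|^2+(1-\Phi(z))\,|e(u^\tau)-e(u_\eps)|^p$, not a uniform quadratic lower bound, but this is harmless since the $\eps$-term alone already forces $\nabla u^\tau\to\nabla u_\eps$ strongly in $L^4(\Omega_T)$, from which the $L^2$ strain convergence follows.
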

 
Next, we will show that an appropriate subsequence of the regularized
solutions $q_\varepsilon$ for $\varepsilon\in(0,1]$ of Definition
  \ref{def:weakSolutionRegularized} converges in ``some sense'' to $q$ which 
 satisfies the limit equations given in Definition
 \ref{def:weakSolutionLimit}. For each $\varepsilon\in(0,1]$, we denote with 
$q_\varepsilon=(u_\varepsilon,c_\varepsilon, w_\eps,z_\varepsilon)$ 
a solution according to Theorem \ref{theorem:regularizedExistence}.

\begin{lemma}
  \label{lemma:energyBoundedness}
  For a.e. $t\in [0,T]$, $t=0$ and every $\varepsilon\in(0,1]$: 
    \begin{equation}
      \begin{split}
        \label{eqn:energyBoundedness}
        \mathcal E_\varepsilon(u_\varepsilon(t),c_\varepsilon(t),z_\varepsilon(t))
        +\int_0^t\int_\Omega \big(-\alpha\partial_t z_\varepsilon
        +\beta|\partial_t z_\varepsilon|^2\big)\,\mathrm dx\mathrm ds
        & +\int_0^t \int_\Omega \nabla  w_\varepsilon : \mathbb{M}  \nabla  w_\varepsilon\,\mathrm dx\mathrm ds \\
        &  \leq C(\mathcal E_1(u_1^0,c^0,z^0)+1).
      \end{split}
    \end{equation}
\end{lemma}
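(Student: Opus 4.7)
The strategy is to start from the energy inequality in Definition \ref{def:weakSolutionRegularized}(v). Since $z_\varepsilon(0)=z^0$, the term $\int_\Omega \alpha(z^0-z_\varepsilon(t))\,\mathrm dx$ equals $-\int_0^t\!\int_\Omega \alpha\,\partial_t z_\varepsilon\,\mathrm dx\mathrm ds$, which already matches the form of the left-hand side of \eqref{eqn:energyBoundedness}. Moving the external work $F(t):=\int_\Omega f(t)\cdot u_\varepsilon(t)\,\mathrm dx$ to the right, the plan is to bound every remaining right-hand side term by $C(\mathcal E_1(u_1^0,c^0,z^0)+1)+\int_0^t \psi(s)\mathcal E_\varepsilon(s)\,\mathrm ds$ for some $\psi\in L^1(0,T)$ independent of $\varepsilon$, and then apply Gronwall's lemma.

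First I would handle the initial contribution $\mathcal E_\varepsilon(u_\varepsilon^0,c^0,z^0)-\int_\Omega f(0)\cdot u_\varepsilon^0\,\mathrm dx$. Since $u_\varepsilon^0$ minimises $\mathcal E_\varepsilon(\cdot,c^0,z^0)-\int f(0)\cdot(\cdot)\,\mathrm dx$ in $W^{1,4}(\Omega;\mathbb R^n)$ with trace $b(0)|_D$, and $u_1^0$ is admissible, this is bounded by $\mathcal E_\varepsilon(u_1^0,c^0,z^0)-\int_\Omega f(0)\cdot u_1^0\,\mathrm dx$, which in turn is $\le \mathcal E_1(u_1^0,c^0,z^0)-\int_\Omega f(0)\cdot u_1^0\,\mathrm dx$ thanks to $\tfrac\varepsilon4|\nabla u_1^0|^4\le \tfrac14|\nabla u_1^0|^4$ for $\varepsilon\le 1$. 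The remaining integral is controlled by Young's inequality. Next, the force term $F(t)$ is attacked via Hölder, Korn applied to $u_\varepsilon(t)-b(t)\in W^{1,p}_D(\Omega;\mathbb R^n)$, and Poincaré, yielding $|F(t)|\le C\|f(t)\|_{L^{p'}}(\|e(u_\varepsilon(t))\|_{L^p}+\|b(t)\|_{W^{1,p}})$. Combining \eqref{eq:cons_ass} with $W^\mathrm{el}\ge W_2^\mathrm{el}$ from (A8), we have $C_1\|e(u_\varepsilon(t))\|_{L^p}^p\le \int_\Omega W^\mathrm{el}\,\mathrm dx + C(\|c_\varepsilon(t)\|_{L^{sp'}}^{sp'}+1)$. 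The hypothesis $s<n(p-1)/(n-p)$ guarantees $sp'<p^\star\le 2^\star$, so Sobolev embedding combined with mass conservation $\int c_\varepsilon=\int c^0$ (applied via Poincaré-Wirtinger) bounds $\|c_\varepsilon(t)\|_{L^{sp'}}^{sp'}$ by a power of $\|\nabla c_\varepsilon(t)\|_{L^2}$, which is itself controlled by the $\tfrac12\mathbf\Gamma\nabla c_\varepsilon:\nabla c_\varepsilon$ term in $\mathcal E_\varepsilon$. A final Young inequality allows the absorption of a small multiple of these quantities into the LHS.

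For the work term, (A4) and (A11) applied with $e_2=0$ yield $|W_{i,e}^\mathrm{el}(e,c)|\le C(W_i^\mathrm{el}(e,c)+1)$ for $i=1,2$, whence
\begin{equation*}
|W_{,e}^\mathrm{el}(e,c,z):e(\partial_t b)|\le C\|\partial_t b\|_{W^{1,\infty}}(W^\mathrm{el}(e,c,z)+1),
\end{equation*}
so the corresponding space-time integral is dominated by $C\int_0^t\|\partial_t b(s)\|_{W^{1,\infty}}(\mathcal E_\varepsilon(s)+1)\,\mathrm ds$, a Gronwall-type contribution since $\partial_t b\in L^1(0,T;W^{1,\infty})$. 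For the $\varepsilon$-regularisation term, Young's inequality $|\nabla u_\varepsilon|^3|\nabla\partial_t b|\le \delta|\nabla u_\varepsilon|^4+C_\delta|\nabla\partial_t b|^4$ produces a leading contribution $\delta\cdot\varepsilon\int_0^t\!\int_\Omega|\nabla u_\varepsilon|^4\,\mathrm dx\mathrm ds\le 4\delta\int_0^t\mathcal E_\varepsilon(s)\,\mathrm ds$, again of Gronwall form, while $\delta$ can be chosen small since $b\in W^{1,1}(0,T;W^{1,\infty})$ independently of $\varepsilon$. The term $\int_{\Omega_t}\partial_t f\cdot u_\varepsilon\,\mathrm dx\mathrm ds$ is handled exactly as $F$.

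Collecting all estimates, choosing the various absorbing constants sufficiently small, and invoking Gronwall's lemma yields the claim with a constant independent of $\varepsilon$. The hard part is the absorption of the external force $F$: the elastic energy is only $W^{1,p}$-coercive in $u$ (through $W_2^\mathrm{el}$, since the $\Phi(z)W_1^\mathrm{el}$ part degenerates as $z\to 0$), so one must carefully track the $|c_\varepsilon|^{sp'}$ byproduct appearing in \eqref{eq:cons_ass} and eliminate it using the Ginzburg-Landau gradient term together with mass conservation. The sharp interplay between the Sobolev exponent condition $s<n(p-1)/(n-p)$ and the $H^1$-coercivity of $c_\varepsilon$ is what makes this closure possible.
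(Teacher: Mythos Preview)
Your overall strategy --- start from the energy inequality \eqref{eqn:regular5}, bound each right-hand side contribution either by a constant or by an $L^1$-in-time coefficient times $\mathcal E_\varepsilon(s)+1$, then apply Gronwall --- is exactly the route taken in the paper, and your treatment of the initial data, the work term $\int W_{,e}^\mathrm{el}:e(\partial_t b)$, and the $\partial_t f$-term matches the paper's almost line for line. Your discussion of how $\|e(u_\varepsilon)\|_{L^p}$ is controlled through the $W_2^{\mathrm{el}}$-coercivity together with Sobolev/Poincar\'e--Wirtinger for the $|c_\varepsilon|^{sp'}$ byproduct is in fact more detailed than what the paper writes.

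There is, however, one genuine technical slip in your handling of the $\varepsilon$-regularisation term. Your Young split
\[
|\nabla u_\varepsilon|^3\,|\nabla\partial_t b|\le \delta|\nabla u_\varepsilon|^4+C_\delta|\nabla\partial_t b|^4
\]
leaves a remainder $C_\delta\,\varepsilon\int_0^t\!\int_\Omega|\nabla\partial_t b|^4\,\mathrm dx\,\mathrm ds$, and bounding this requires $\|\nabla\partial_t b(\cdot)\|_{L^\infty}^4\in L^1(0,T)$. The hypothesis $b\in W^{1,1}(0,T;W^{1,\infty}(\Omega))$ only gives $\|\nabla\partial_t b(\cdot)\|_{L^\infty}\in L^1(0,T)$, which does \emph{not} imply the fourth power is integrable. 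The paper avoids this by first pulling out the sup-norm and then using the elementary inequality $|\nabla u_\varepsilon|^3\le C(|\nabla u_\varepsilon|^4+1)$:
\[
\varepsilon\int_\Omega|\nabla u_\varepsilon|^2\nabla u_\varepsilon:\nabla\partial_t b\,\mathrm dx
\le C\|\nabla\partial_t b(s)\|_{L^\infty}\Big(\varepsilon\int_\Omega|\nabla u_\varepsilon|^4\,\mathrm dx+1\Big)
\le C\|\nabla\partial_t b(s)\|_{L^\infty}\big(\mathcal E_\varepsilon(s)+1\big),
\]
which integrates against the $L^1$-in-time factor $\|\nabla\partial_t b(s)\|_{L^\infty}$ and fits directly into the Gronwall scheme. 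Replacing your Young split by this estimate closes the gap.
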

\begin{proof}
In the following, $C>0$ denotes a context-dependent constant
independently of $t$ and $\varepsilon$. By means of  \eqref{eqn:W1growthEst4}
and \eqref{eqn:growthEst4}, we estimate for $ s \in[0,T]$:
\begin{align}
  \notag 
  & \int_\Omega \partial_e  W^\mathrm{el}(e(u_\eps(s),c_\eps(s),z_\eps(s)):e(\partial_t b(s))\, \mathrm dx\\
  \label{eqn:gronwallEstimate1}
  & \qquad\qquad\leq C\|\nabla\partial_t b(s)\|_{L^\infty(\Omega)}
  \int_\Omega \Big( W^\mathrm{el}(e(u_\eps(s)),c_\eps(s),z_\eps(s)) +1  \Big)\,\mathrm dx \\
  & \qquad \qquad\leq  C \|\nabla \partial_t b(s)\|_{L^\infty(\Omega)}  \big(\mathcal E_\varepsilon(e(u_\eps(s)), c_\eps(s), z_\eps(s)) +1 \big).
\end{align}
In addition, for $ s \in[0,T]$,
\begin{align}
  \eps \int_{\Omega}|\nabla u_\eps(s)|^2\nabla u_\eps(s) 
  & :\nabla\partial_t b(s)\,\mathrm dx \notag \\
  & \leq \eps  \|\nabla \partial_t b(s)\|_{L^\infty(\Omega)}
  \int_{\Omega}|\nabla u_\eps(s)|^3\,\mathrm dx \notag\\
  & \leq \eps C  \|\nabla \partial_t b(s)\|_{L^\infty(\Omega)}
  \Big( \int_{\Omega}|\nabla u_\eps(s)|^4\,\mathrm dx +1 \Big)  \notag\\
  &\leq  C \|\nabla \partial_t b(s)\|_{L^\infty(\Omega)}  \Big(\mathcal
  E_\varepsilon(e(u_\eps(s)), c_\eps(s), z_\eps(s)) +1 \Big)\,
  \label{eqn:gronwallEstimate2}
\end{align}
and 
\begin{align}
  \int_\Omega \partial_t f(s) \cdot u_\eps(s) \, \mathrm dx &\le C  
  \| \partial_t f(s)\|_{L^{p'}(\Omega)}   \| u_\eps(s)\|_{L^{p}(\Omega)}
  \notag\\ 
  & \le C \|  \partial_t f(s)\|_{L^{p'}(\Omega)}   \Big(\mathcal E_\varepsilon(e(u_\eps(s)), c_\eps(s), z_\eps(s)) +1 \Big),
  \label{eqn:gronwallEstimate3a}
\end{align}
\begin{align}
  \int_\Omega  f(0) \cdot u_\eps(0) \, \mathrm dx &\le 
  C  \| f(0)\|_{L^{p'}(\Omega)} \Big( \mathcal E_\varepsilon(e(u_\eps^0), c^0,
  z^0) +1 \Big),
  \label{eqn:gronwallEstimate3c}\\
  \int_\Omega  f(s) \cdot u_\eps(s) \, \mathrm dx 
  &\le  C  \|  f(s)\|^{p'}_{L^{p'}(\Omega)}   + \frac{1}{2} \Big(\mathcal E_\varepsilon(e(u_\eps(s)), c_\eps(s), z_\eps(s)) +1 \Big),
  \label{eqn:gronwallEstimate3b}
\end{align}
where the last inequality follows by the general Young's inequality. To simplify notation, we define the functions
\begin{multline*}
  \gamma_\eps(t):= \frac{1}{2} \mathcal E_\eps(e(u_\eps(t)), c_\eps(t), z_\eps(t))
  +  \int_\Omega \alpha (z^0{-}z_\eps(t))\,\mathrm dx+ 
  \int_{\Omega_t} \beta |\partial_t z_\eps|^2\,\mathrm dx\mathrm ds\\
  +  \int_{\Omega_t}   \nabla  w_\varepsilon : \mathbb{M}  \nabla  w_\varepsilon \,  \mathrm dx \mathrm ds 
\end{multline*}
and 
\begin{align*}
  h(s):=  \|\nabla \partial_t b(s)\|_{L^\infty(\Omega)}   +   \| \partial_t f(s)\|_{L^{p'}(\Omega)}.
\end{align*}
Using \eqref{eqn:gronwallEstimate1}--\eqref{eqn:gronwallEstimate3c}, the energy inequality
\eqref{eqn:regular5} of the regularized system can be estimated for a.e. $t
\in[0,T]$ as follows:
\begin{align*}
  \gamma_\eps  (t)\leq{} & \mathcal E_\varepsilon(e(u_\eps^0), c^0,z^0) + C
  +C\int_0^{t} h(s) \,
  \mathcal E_\varepsilon(e(u_\eps(s)), c_\eps(s), z_\eps(s))\,\mathrm ds   \\
  &\hspace{8.5cm}
  +C  \| f(0)\|_{L^{p'}(\Omega)} \mathcal E_\varepsilon(e(u_\eps^0), c^0, z^0) \\
  \leq{}&C\mathcal E_\varepsilon(e(u_\eps^0), c^0,z^0) + C
  +C\int_0^{t}  h(s)\,  \mathcal E_\varepsilon(e(u_\eps(s)), c_\eps(s), z_\eps(s))\,\mathrm ds\\
  \leq{}& C\mathcal E_\varepsilon(e(u_\eps^0), c^0,z^0) + C
  +C\int_0^{t} h(s) \,    \mathcal \gamma_\varepsilon(s)\,\mathrm ds\\
\end{align*}
Since $\mathcal E_\varepsilon(u_\eps^0,c^0,z^0) - \int_\Omega f(0)  \cdot u_\eps^0 \mathrm dx 
\leq\mathcal E_\varepsilon(u_1^0,c^0,z^0) - \int_\Omega f (0) \cdot  u_1^0 \mathrm dx \leq \mathcal E_1(u_1^0,c^0,z^0)
- \int_\Omega f (0) \cdot u_1^0 \mathrm dx  $ Gronwall's inequality shows for a.e. $t\in[0,T]$ and every $\eps \in (0,1]$:
\begin{align*}
  \gamma_\eps(t)&\leq C+ C\mathcal E_\varepsilon(e(u_\eps^0), c^0,z^0) \\
  & \hspace{2cm}
  +C\int_{0}^t (C+ C\mathcal E_\varepsilon(e(u_\eps^0), c^0,z^0)) h(s)
  \exp\left(\int_s^t  h(l) \,\mathrm dl\right)\,\mathrm ds\\
  &\leq C(\mathcal E_1(u_1^0,c^0,z^0)+1) . 
\end{align*}
\ep
\end{proof}
\begin{lemma}[A-priori estimates]
  \label{lemma:boundednessEpsilon}
  There exists some constant $C>0$ independently of $\varepsilon>0$ such that for all $\varepsilon\in(0,1]$:\\

\begin{tabular}[t]{ll}
  \begin{minipage}{20em}
    \begin{enumerate}
      \renewcommand{\labelenumi}{(\roman{enumi})}
    \item
      $\|u^0_\varepsilon\|_{ W^{1,p}(\Omega;\mathbb R^n)} \leq C$,\\[1mm]
      $\|u_\varepsilon\|_{L^\infty(0,T; W^{1,p}(\Omega;\mathbb R^n))}\leq C$,
    \item
      $\varepsilon^{1/4}\|u_\varepsilon\|_{L^\infty(0,T;W^{1,4}(\Omega;\mathbb R^n))}\leq C$,
    \item
      $\| \sqrt{\Phi(z^0)} \, e( u^0_\varepsilon) \|_{ L^{2}(\Omega;\mathbb R^{n\times n})}\leq C$, \\[1mm]                        
      $\| \sqrt{\Phi(z_\varepsilon)} \,  e( u_\varepsilon )\|_{L^\infty(0,T;
      L^{2}(\Omega;\mathbb
      R^{n\times n}))}\leq C$,                                                
    \end{enumerate}
  \end{minipage}
  &
  \begin{minipage}{22em}
    \begin{enumerate}
      \renewcommand{\labelenumi}{(\roman{enumi})}
    \item[(iv)]
      $\|c_\varepsilon\|_{L^\infty(0,T;H^1(\Omega;\R^N))}\leq C$,
    \item[(v)]
      $\| \partial_t
      c_\varepsilon\|_{L^2( 0,T;
        (H^{1}(\Omega; \R^N))')}\leq C$,
    \item[(vi)]
      $\|z_\varepsilon\|_{L^\infty(0,T;H^{1}(\Omega))}\leq C$,                                                          
    \item[(vii)]
      $\|\partial_t z_\varepsilon\|_{L^2(\Omega_T)}\leq C$,
    \item[(viii)]
      $\|w_\varepsilon\|_{L^2(0,T;H^1(\Omega;
      \R^N))}\leq
      C$.\\
    \end{enumerate}
  \end{minipage}
\end{tabular}
\end{lemma}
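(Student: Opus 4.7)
The starting point is the $\eps$-uniform energy bound of Lemma~\ref{lemma:energyBoundedness}. My plan is to read off each of (i)--(viii) from the three non-negative contributions on the left-hand side of \eqref{eqn:energyBoundedness}: the energy $\mathcal E_\eps(u_\eps(t),c_\eps(t),z_\eps(t))$, the damage dissipation $\int_0^t\!\int_\Omega(-\alpha\partial_t z_\eps + \beta|\partial_t z_\eps|^2)\,\mathrm dx\,\mathrm ds$, and the diffusive dissipation $\int_0^t\!\int_\Omega \nabla w_\eps:\mathbb M\nabla w_\eps\,\mathrm dx\,\mathrm ds$. Throughout, $C$ denotes a generic constant independent of $t$ and $\eps\in(0,1]$.

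I collect the easy bounds first. Control of $\int_\Omega\tfrac12\mathbf\Gamma\nabla c_\eps:\nabla c_\eps$ together with positive definiteness of $\mathbf\Gamma$ and conservation of mass $\int_\Omega c_\eps(t)=\int_\Omega c^0$ (obtained by testing \eqref{eqn:regular1} with constants) yields (iv) by Poincar\'e--Wirtinger. Control of $\int_\Omega\tfrac12|\nabla z_\eps|^2$, combined with $0\le z_\eps\le 1$ enforced by the indicator term and $\partial_t z_\eps\le 0$, gives (vi). The dissipation term $\int_{\Omega_t}\beta|\partial_t z_\eps|^2$ is exactly (vii), and $\int_\Omega\frac{\eps}{4}|\nabla u_\eps|^4$ is (ii).

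Next I handle (i) and (iii). Using $W_1^\mathrm{el}\ge W_2^\mathrm{el}$ with \eqref{eq:cons_ass} produces the pointwise lower bound
\begin{equation*}
W^\mathrm{el}(e,c,z) \ge C_1\bigl(\Phi(z)|e|^2 + (1-\Phi(z))|e|^p\bigr) - C_2\bigl(|c|^4 + |c|^{sp'} + 1\bigr).
\end{equation*}
The growth condition $s<n(p-1)/(n-p)$ gives $sp'<p^\star<2^\star$, so the Sobolev embedding $H^1\hookrightarrow L^{2^\star}$ applied to (iv) renders the $|c|$-contributions uniformly integrable. The energy bound therefore yields $\|\sqrt{\Phi(z_\eps)}\,e(u_\eps)\|_{L^\infty(0,T;L^2)}\le C$ and $\|e(u_\eps)\|_{L^\infty(0,T;L^p)}\le C$; Korn's inequality in $W^{1,p}_D$ together with the Dirichlet trace $u_\eps|_{D_T}=b|_{D_T}$ and the $W^{1,\infty}$-bound on $b$ upgrades the latter to (i). The bounds on $u_\eps^0$ follow by the same argument, applied to the minimization inequality $\mathcal E_\eps(u_\eps^0,c^0,z^0)-\int_\Omega f(0)\cdot u_\eps^0\,\mathrm dx\le \mathcal E_1(u_1^0,c^0,z^0)-\int_\Omega f(0)\cdot u_1^0\,\mathrm dx$, which holds because $u_\eps^0$ is the minimizer and $\frac{\eps}{4}\int|\nabla u_1^0|^4\le\frac{1}{4}\int|\nabla u_1^0|^4$ for $\eps\le 1$.

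The main obstacle I expect is (viii). The diffusive dissipation only controls $\|\nabla w_\eps\|_{L^2(\Omega_T)}$ (via positive definiteness of $\mathbb M$ on $T\Sigma$ and the fact that $w_\eps\in T\Sigma$, visible from \eqref{eqn:regular2} and $\mathbb M=\mathbb M\mathbb P$), so a uniform bound on the spatial mean of $w_\eps$ has to be obtained separately. Testing \eqref{eqn:regular2} with a constant $\zeta\in T\Sigma$ reduces that mean to $\int_\Omega\mathbb P\bigl(W_{,c}^\mathrm{ch}(c_\eps)+W_{,c}^\mathrm{el}(e(u_\eps),c_\eps,z_\eps)\bigr)\cdot\zeta\,\mathrm dx$. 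Splitting $W_{,c}^\mathrm{el}=\Phi(z_\eps)W_{1,c}^\mathrm{el}+(1-\Phi(z_\eps))W_{2,c}^\mathrm{el}$ and using (A5), (A12), (A14) produces a pointwise majorant of the form $C\bigl(\Phi(z_\eps)|e(u_\eps)|^2+|e(u_\eps)|^p+|c_\eps|^2+|c_\eps|^{2^\star/2}+|c_\eps|^s+1\bigr)$, whose $L^1(\Omega)$-norm is uniformly controlled by (i), (iii) and (iv). Poincar\'e--Wirtinger then promotes the gradient bound to the full $L^2(0,T;H^1)$-bound (viii). Finally, (v) is immediate from (viii) by testing \eqref{eqn:regular1} with $\zeta\in H^1(\Omega;\R^N)$ and applying Cauchy--Schwarz, giving $\|\partial_t c_\eps\|_{L^2(0,T;(H^1)')}\le C\|\nabla w_\eps\|_{L^2(\Omega_T)}$.
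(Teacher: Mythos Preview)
Your proof is correct and follows essentially the same route as the paper: both read off (ii), (iv), (vi), (vii) directly from the energy bound of Lemma~\ref{lemma:energyBoundedness}, extract (i) and (iii) from the coercivity \eqref{eq:cons_ass} of $W^\mathrm{el}$ combined with Korn's inequality, control the spatial mean of $w_\eps$ via \eqref{eqn:regular2} to obtain (viii), and deduce (v) from \eqref{eqn:regular1}. Your derivation of the $L^p$-bound on $e(u_\eps)$ from the lower bound $C_1\bigl(\Phi(z)|e|^2+(1-\Phi(z))|e|^p\bigr)$ is slightly more direct than the paper's absorption argument (the paper bounds $\Phi(z_\eps)|e(u_\eps)|^p$ by $\eps\,\Phi(z_\eps)|e(u_\eps)|^2+C(\eps)$ via Young and absorbs), but the two are equivalent once one notes $|e|^p\le |e|^2+1$ on the set $\{\Phi(z)\ge 1/2\}$.
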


\begin{proof}
According to Lemma \ref{lemma:energyBoundedness}, 
we immediately obtain (vi) and (vii).  
Due to $\int_\Omega c_\varepsilon(t)\,\mathrm dx=\mathrm{const.}$ and the boundedness of
$\|\nabla c_\varepsilon(t)\|_{L^\infty(0,T;L^2(\Omega; \R^{N\times n}))}$, Poincar\'e's inequality yields (iv).
In addition, (ii) follows from Poincar\'e's inequality. 

From the growth conditions for $W_1^\mathrm{el}$ and $W_2^\mathrm{el}$, Lemma 
\ref{lemma:energyBoundedness}, Young's inequality, $1 < p <2$, 
$z \in [0,1]$ a.e.~in $\Omega_T$, $s < \frac{n(p-1)}{n-p}$ and (iv), we infer
for $t=0$, a.e.~$t\in[0,T]$ and any $\eps >0$:
\begin{equation}
  \begin{split}
    \int_\Omega & \big(\Phi ( z_\eps(t) )
    |e(u_\eps(t))|^2 +   |e(u_\eps(t))|^p \big) \, \mathrm dx\\
    & \le C_1 \int_\Omega\Big(  \Phi ( z_\eps(t) ) \big(
    W_1^\mathrm{el} (e(u_\eps(t)), c_\eps(t)) + | c_\eps(t)|^4 +1\big) 
    +  W_2^\mathrm{el} (e(u_\eps(t)), c_\eps(t)) + |
    c_\eps(t)|^{s p' } +1 \Big)\, \mathrm dx\\
    & \le C_1 \int_\Omega \big( W^\mathrm{el} (e(u_\eps(t)), c_\eps(t),
    z_\eps(t)) +  \Phi ( z_\eps(t) )
    W_2^\mathrm{el} (e(u_\eps(t)), c_\eps(t))  \big)\,
    \mathrm dx + C_2    \\
    & \le C_1 \int_\Omega W^\mathrm{el} (e(u_\eps(t)), c_\eps(t),
    z_\eps(t)    ) \, \mathrm dx+  C_3 \int_\Omega \Phi ( z_\eps(t) )   
    \big( |e(u_\eps(t))|^p   + |c_\eps(t)|^s +1   \big)\, \mathrm dx + C_2    \\
    &\le C_4 +  {C}_3 \int_\Omega \Phi (  z_\eps(t) )    |e(u_\eps(t))|^p   \, \mathrm dx  \\  
    & \le C_4 + {C}_3 \int_\Omega \big(\Phi (  z_\eps(t) ) \big)^{1-\frac{p}{2}}  \big(\Phi (
    z_\eps(t) )\big)^{\frac{p}{2}}   |e(u_\eps(t))|^p   \, \mathrm dx \\
    & \le C_4+  {C}_3 \,  \eps \int_\Omega \Phi (
    z_\eps(t) ) |e(u_\eps(t))|^2   \, \mathrm dx+ C(\eps)
  \end{split}
\end{equation}
Hence, we attain (i) by using the generalized Korn's inequality, see for
instance \cite{Nit81} and \cite{KO88}, and (iii). 

Due to \eqref{eqn:regular2} we obtain boundedness of
$\int_\Omega w_\varepsilon(t)\,\mathrm dx$. Since $\|\nabla w_\varepsilon(t)\|_{L^2(\Omega_T;
  \R^{N \times n})}$ is also bounded, Poincar\'e's inequality yields (viii).

Finally, we know from the boundedness of $\{\nabla w_\varepsilon\}$ in
$L^2(\Omega_T; \R^{N \times n})$ that $\{\partial_t c_\varepsilon\}$ is also 
bounded in $L^2( 0,T;(H^1(\Omega; \R^N))')$ with respect to $\varepsilon$ by equation
\eqref{eqn:regular1}. Therefore, (v) is satisfied. $\phantom{w}$
\ep
\end{proof}

\begin{lemma}[Convergence properties]
  \label{lemma:weakConvergenceEpsilon}
  There exists a subsequence $\{q_\epsk\}$ with $\epsk \searrow 0$  
  and a tuple $q=(u,c,w,z)$, satisfying (i) of Definition \eqref{def:weakSolutionLimit},
  $0\leq z\leq 1$ and $\partial_t z\leq 0$ a.e. in  $\Omega_T$, such that \\
  
  \begin{tabular}[t]{ll}
    \begin{minipage}{36em}
      \begin{enumerate}
        \renewcommand{\labelenumi}{(\roman{enumi})}
      \item
        $c_\epsk\stackrel{}{\rightharpoonup} c\text{ in }H^1(0,T;(H^{1}(\Omega; \R^N))')$,\\
        $c_\epsk(t)\rightharpoonup c(t)\text{ in }H^{1}(\Omega; \R^N)$ a.e. $t \in [0,T]$, \\
        $c_\epsk\rightarrow c\text{ a.e. in }\Omega_T$,\\
      \item
        $z_\epsk\stackrel{\star}{\rightharpoonup}
        z\text{ in }L^\infty(0,T; H^{1}(\Omega))$,\\
        $z_\epsk(t)\rightharpoonup
        z(t)\text{ in } H^{1}(\Omega)$ a.e. $t \in [0,T]$,\\
        $z_\epsk\rightarrow z\text{ a.e. in }\Omega_T$, \\
        $z_\epsk\rightharpoonup
        z\text{ in }  H^1(0,T;L^2(\Omega))$,\\
        $ z_\epsk \to z$ \text{ in } $L^{\hat{p}}(\Omega_T)$ for $\hat{p} \in [1,\infty)$,
      \end{enumerate}
      \begin{enumerate}
        \renewcommand{\labelenumi}{(\roman{enumi})}
      \item[(iii)]
        $u_\epsk\stackrel{\star}{\rightharpoonup}  u\text{ in} L^\infty(0,T;W^{1,p}(\Omega;\mathbb R^n))$,\\   
        $u^0_\epsk{\rightharpoonup} \, \, u^0\text{ in }W^{1,p}(\Omega;\mathbb R^n)$,\\
        $ \sqrt{\Phi (z_\epsk)}\,  e(  u_\epsk ) \stackrel{\star}{\rightharpoonup}
        \sqrt{\Phi ( z)} \, e( u)\text{ in }L^\infty(0,T;L^{2}(\Omega;\mathbb R^{n\times n}))$,\\
        $ \sqrt{\Phi (z^0)}\,e( u^0_\epsk ) \stackrel{}{\rightharpoonup} \sqrt{\Phi ( z^0)}\,
        e( u^0)\text{ in } L^{2}(\Omega;\mathbb R^{n\times n})$,
      \item[(iv)]
        $w_\epsk\rightharpoonup w\text{ in } L^2(0,T;H^1(\Omega;\R^N))$\\
      \end{enumerate}
    \end{minipage}
  \end{tabular}\\
  as $k \to \infty$.
\end{lemma}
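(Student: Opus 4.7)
The strategy is: (a) extract weak / weak-$\star$ limits from the uniform bounds of Lemma \ref{lemma:boundednessEpsilon}; (b) upgrade to strong and a.e.~convergence for $c_\eps$ and $z_\eps$ through an Aubin--Lions--Simon compactness argument; (c) pass pointwise-in-time weak bounds to almost every $t\in[0,T]$; and (d) identify the nonlinear weak limit of $\sqrt{\Phi(z_\eps)}\,e(u_\eps)$, which is the only genuinely nontrivial point. A diagonal choice ensures that one and the same subsequence $\eps_k\searrow 0$ realizes all the stated convergences simultaneously.

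\textbf{Steps (a) and (b).} The bounds of Lemma \ref{lemma:boundednessEpsilon} place $\{c_\eps\}$, $\{z_\eps\}$, $\{u_\eps\}$, $\{w_\eps\}$, $\{u_\eps^0\}$, $\{\sqrt{\Phi(z_\eps)}\,e(u_\eps)\}$ and $\{\sqrt{\Phi(z^0)}\,e(u_\eps^0)\}$ in bounded subsets of reflexive or separable-dual Banach spaces; Banach--Alaoglu yields the weak and weak-$\star$ convergences in (i)--(iv) along a common subsequence, together with a provisional limit $\xi\in L^\infty(0,T;L^2(\Omega;\R^{n\times n}))$ for $\sqrt{\Phi(z_\eps)}\,e(u_\eps)$ and an initial-datum limit $\xi^0$. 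The compact embedding $H^1(\Omega)\hookrightarrow\hookrightarrow L^2(\Omega)$ applied to $\{c_\eps\}\subset L^\infty(0,T;H^1)$ with $\{\partial_t c_\eps\}\subset L^2(0,T;(H^1)')$ gives, via Aubin--Lions--Simon, $c_{\eps_k}\to c$ strongly in $L^2(\Omega_T;\R^N)$, and the identical argument for $\{z_\eps\}$ together with $\{\partial_t z_\eps\}\subset L^2(\Omega_T)$ yields $z_{\eps_k}\to z$ in $L^2(\Omega_T)$. Passing to a further (not relabeled) subsequence gives the a.e.~convergences. Since $0\le z_\eps\le 1$ a.e.~(the upper bound from $z^0\le 1$ together with $\partial_t z_\eps\le 0$), dominated convergence upgrades this to $L^{\hat p}(\Omega_T)$ for every $\hat p\in[1,\infty)$; the constraints $0\le z$ and $\partial_t z\le 0$ survive the limit either pointwise or by weak closedness of the corresponding convex cones.

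\textbf{Step (c).} For a.e.~$t\in[0,T]$, $\|c_\eps(t)\|_{H^1}\le\|c_\eps\|_{L^\infty(0,T;H^1)}\le C$; Fubini together with the strong $L^2(\Omega_T)$-convergence of $c_{\eps_k}$ allows the extraction of a further subsequence (and possibly enlargement of a null set of times) such that $c_{\eps_k}(t)\to c(t)$ in $L^2(\Omega;\R^N)$ for a.e.~$t$. Uniqueness of the weak $H^1$-limit then forces $c_{\eps_k}(t)\rightharpoonup c(t)$ in $H^1(\Omega;\R^N)$ for a.e.~$t$. The identical argument delivers the pointwise-in-$t$ weak convergence for $z_{\eps_k}$.

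\textbf{Step (d); main obstacle.} It remains to verify $\xi = \sqrt{\Phi(z)}\,e(u)$ (and analogously for $\xi^0$), despite $e(u_\eps)$ converging only weakly in $L^p$ with $p<2$ and $\sqrt{\Phi(z_\eps)}$ only a.e.~and weak-$\star$. The crucial observation is that $0\le\Phi\le 1$, so $\{\sqrt{\Phi(z_\eps)}\}$ is bounded in $L^\infty(\Omega_T)$; combined with the a.e.~convergence $\sqrt{\Phi(z_{\eps_k})}\to\sqrt{\Phi(z)}$ (continuity of $\Phi$) and dominated convergence, this gives strong $L^q(\Omega_T)$-convergence of $\sqrt{\Phi(z_{\eps_k})}$ for every $q\in[1,\infty)$. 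Then for any test tensor $\phi\in C_c^\infty(\Omega_T;\R^{n\times n})$,
\begin{equation*}
  \int_{\Omega_T}\sqrt{\Phi(z_{\eps_k})}\,e(u_{\eps_k}):\phi\,\mathrm dx\mathrm dt
  = \int_{\Omega_T} e(u_{\eps_k}) : \bigl(\sqrt{\Phi(z_{\eps_k})}\,\phi\bigr)\,\mathrm dx\mathrm dt,
\end{equation*}
and the right-hand side passes to the limit since $\sqrt{\Phi(z_{\eps_k})}\,\phi\to\sqrt{\Phi(z)}\,\phi$ strongly in $L^{p'}(\Omega_T)$ while $e(u_{\eps_k})\rightharpoonup e(u)$ weakly in $L^p(\Omega_T)$. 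This identifies $\xi = \sqrt{\Phi(z)}\,e(u)$ in the sense of distributions, hence as an $L^\infty(0,T;L^2)$ function; a time-independent test $\phi$ applied to the initial data gives $\xi^0 = \sqrt{\Phi(z^0)}\,e(u^0)$, completing the list of convergences.
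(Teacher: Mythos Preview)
Your proof is correct and follows essentially the same route as the paper: Aubin--Lions compactness for $c_\eps$ and $z_\eps$, extraction of pointwise-in-$t$ weak $H^1$ limits via the strong $L^2(\Omega_T)$ convergence combined with the uniform $L^\infty(0,T;H^1)$ bound, and identification of the weak-$\star$ limit of $\sqrt{\Phi(z_\eps)}\,e(u_\eps)$ through the ``strong $\times$ weak'' product argument. The paper's proof of item (iii) simply asserts that this last identification ``follows from'' the boundedness in $L^\infty(0,T;L^2)$ together with the strong convergence of $z_\eps$ from (ii), whereas you spell out the underlying mechanism (rewriting the pairing so that the bounded-in-$L^\infty$, a.e.-convergent factor multiplies the test function and passes to a strong $L^{p'}$ limit against the weak $L^p$ limit of $e(u_\eps)$); this is exactly the content the paper leaves implicit.
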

\begin{proof}
  \begin{enumerate}
    \renewcommand{\labelenumi}{(\roman{enumi})}
  \item
    Properties (iv) and (v) of Lemma \ref{lemma:boundednessEpsilon}
    show that $\{ c_\epsk \}$ converges strongly to an element $c$ in $L^2(\Omega_T)$
    for a subsequence by a compactness result due to J.~P. Aubin and J.~L. Lions (see \cite{Simon}).
    This allows us to extract a further subsequence (still denoted by $\{\epsk\}$)
    such that $c_\epsk(t)\rightarrow c(t)$ in $L^2(\Omega; \R^N)$ for
    a.e. $t\in[0,T]$ as $k\to \infty$. 
    Taking also the boundedness of $\{c_\epsk\}$ in $L^\infty(0,T;H^1(\Omega;\R^N))$ into
    account, we obtain a subsequence with
    $c_\epsk(t)\rightharpoonup c(t)\text{ in }H^{1}(\Omega;\R^N)$ for
    a.e. $t\in[0,T]$. Moreover, $c_\epsk\rightarrow c$ a.e. in
    $\Omega_T$ with $c \in \Sigma$ as well as
    $c_\epsk\stackrel{\star}{\rightharpoonup}c$
    in $H^1(0,T;(H^1(\Omega;\R^N))')$ as $k\to \infty$.  
  \item
    These properties follow from the same argumentation as in (i) and the
    boundedness of $\{z_\epsk\}$ in $H^1(0,T;L^2(\Omega))$. 
    The function $z$ derived in this way is monotonically decreasing with
    respect to $t$, i.e. $\partial_t z\leq 0$ a.e. in $\Omega_T$ and $z \in
    [0,1]$ a.e.\,. By compact embeddings, we obtain the strong convergence results.
  \item
    Because of the boundedness of $\{u_\epsk\}$ and $\{u^0_\epsk\}$ in 
    $L^\infty(0,T;W^{1,p}(\Omega;\mathbb R^n))$ and $W^{1,p}(\Omega;\mathbb
    R^n)$, respectively, we obtain the first two properties. The other
    properties follow from the previous two, the boundedness of $\{ \sqrt{\Phi(z_\epsk)}
    e(u_\epsk)\}$ and $\{ \sqrt{\Phi(z^0)} e(u^0_\epsk)\}$ in
    $L^\infty(0,T;L^2(\Omega;\mathbb R^{n \times n})) $ and
    $L^2(\Omega;\mathbb R^{n \times n}), $ respectively, and (ii).
  \item
    This property follows from the boundedness of $\{w_\epsk\}$ in $L^2(0,T;H^1(\Omega;\R^N))$.
    \ep
  \end{enumerate}
\end{proof}

\begin{lemma}
  \label{lemma:weakConvergence3a}
  There exist sequences $\{q_\epsk\}$ and $\{q^0_\epsk\}$ with $\epsk \searrow 0$ such
  that the following properties are satisfied: 
  \begin{itemize}
  \item[(i)] There exist $\theta_{u;c;z} \in
    L^{\infty}(0,T;L^{2}(\Omega;\mathbb R^{n\times n}))$ 
    and $\theta^0_{u;c;z}  \in L^{2}(\Omega;\mathbb R^{n\times n} )$ with 
    \begin{equation*}
      \sqrt{\Phi(z_\epsk)} W^\mathrm{el}_{1,e} ( e(u_\epsk), c_\epsk)
      \stackrel{\star}{\rightharpoonup}    \theta_{u;c;z} \qquad \text{ in } 
      L^\infty(0,T;L^{2}(\Omega;\mathbb R^{n \times n}) ) 
    \end{equation*}
    and 
    \begin{equation*}
      \sqrt{\Phi(z^0)}W^\mathrm{el}_{1,e} ( e(u^0_\epsk), c^0)
      \stackrel{}{\rightharpoonup}    \theta^0_{u;c;z} \qquad \text{ in } 
      L^{2}(\Omega;\mathbb R^{n \times n}). \hspace{1.3cm} 
    \end{equation*}
    In particular,
    \begin{equation}
      \label{eqn:weakConvergenceW2lemma3a}
            {\Phi(z_\epsk)} W^\mathrm{el}_{1,e} ( e(u_\epsk), c_\epsk)
            \stackrel{\star}{\rightharpoonup}   \sqrt{\Phi(z)}  \, \theta_{u;c;z} \qquad \text{ in } 
            L^\infty(0,T;L^{2}(\Omega;\mathbb R^{n \times n}) ) 
    \end{equation}
    and 
    \begin{equation}
      {\Phi(z^0)} W^\mathrm{el}_{1,e} ( e(u^0_\epsk), c^0)
      \stackrel{}{\rightharpoonup}   \sqrt{\Phi(z^0)}  \, \theta^0_{u;c;z} \qquad \text{ in } 
      L^{2}(\Omega;\mathbb R^{n \times n})  .
    \end{equation}
  \item[(ii)]
    \bes
    \begin{split}
      \liminf_{k \to \infty} \iot 
      & \Phi(z_\epsk) W^\mathrm{el}_{1,e}(e(u_\epsk),c_\epsk) :  e(u_\epsk) \,\mathrm dx  \mathrm dt 
      \ge   \iot    \sqrt{\Phi(z)}  \, \theta_{u;c;z} :e(u) \,\mathrm dx  \mathrm dt
    \end{split}
    \ees
    and 
    \bes
    \begin{split}
      \liminf_{k \to \infty} 
      \int_\Omega &  \Phi(z^0) W^\mathrm{el}_{1,e}(e(u^0_\epsk),c^0) : 
      e(u^0_\epsk)  \,\mathrm dx  \ge 
      \int_\Omega    \sqrt{\Phi(z^0)}  \, \theta^0_{u;c;z} :e(u^0) \,\mathrm dx  .
    \end{split}
    \ees
  \end{itemize}
\end{lemma}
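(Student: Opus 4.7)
The plan is to prove (i) by a uniform $L^\infty(0,T;L^2)$ bound and weak-$\star$ compactness, and (ii) via a Minty-type monotonicity argument completed by a truncation approximation of $e(u)$. For (i), assumption \eqref{eqn:W1growthEst4} with $e_1=0$ and \eqref{eqn:W1growthEst2a} give $|W_{1,e}^\mathrm{el}(e,c)|\le C(|e|+|c|^2+1)$, hence (using $\Phi\le 1$)
\begin{equation*}
\Phi(z_\epsk)\,|W_{1,e}^\mathrm{el}(e(u_\epsk),c_\epsk)|^2 \le C\bigl(\Phi(z_\epsk)|e(u_\epsk)|^2 + |c_\epsk|^4 + 1\bigr),
\end{equation*}
whose right-hand side is bounded in $L^\infty(0,T;L^1(\Omega))$ by Lemma \ref{lemma:boundednessEpsilon}(iii)--(iv) combined with the Sobolev embedding of $H^1$. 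Weak-$\star$ compactness produces $\theta_{u;c;z}$, and the identical estimate at $t=0$ produces $\theta^0_{u;c;z}$. The refined convergence \eqref{eqn:weakConvergenceW2lemma3a} is obtained by factoring $\Phi(z_\epsk) = \sqrt{\Phi(z_\epsk)}\cdot\sqrt{\Phi(z_\epsk)}$ and noting that $\sqrt{\Phi(z_\epsk)}\to\sqrt{\Phi(z)}$ strongly in every $L^q(\Omega_T)$, $q<\infty$ (from Lemma \ref{lemma:weakConvergenceEpsilon}(ii), continuity of $\sqrt{\Phi}$, and dominated convergence using $\sqrt\Phi\le 1$); the product of this strong factor with the weak-$\star$ limit in $L^\infty(0,T;L^2)$ yields the claim.

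For (ii), the central step is Minty's trick: the monotonicity \eqref{eqn:W1growthEst3} of $W_{1,e}^\mathrm{el}(\cdot,c)$, multiplied by $\Phi(z_\epsk)\ge 0$, gives for every $v\in L^\infty(\Omega_T;\R^{n\times n}_\text{sym})$
\begin{equation*}
\iot \Phi(z_\epsk)\bigl(W_{1,e}^\mathrm{el}(e(u_\epsk),c_\epsk)-W_{1,e}^\mathrm{el}(v,c_\epsk)\bigr):\bigl(e(u_\epsk)-v\bigr)\,\mathrm dx\,\mathrm dt \ge 0.
\end{equation*}
Expanding and passing to $\liminf$, the three auxiliary terms are treated as follows. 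The term $\Phi(z_\epsk) W_{1,e}^\mathrm{el}(e(u_\epsk),c_\epsk):v$ is written as $[\sqrt{\Phi(z_\epsk)} W_{1,e}^\mathrm{el}(e(u_\epsk),c_\epsk)]:[\sqrt{\Phi(z_\epsk)}v]$ (weak-$\star$ times strong) and converges to $\sqrt{\Phi(z)}\theta_{u;c;z}:v$. The mixed term $\Phi(z_\epsk) W_{1,e}^\mathrm{el}(v,c_\epsk):e(u_\epsk)$ is split analogously, with the first factor converging strongly in $L^2(\Omega_T)$ by $c_\epsk\to c$ a.e.\ and dominated convergence (for fixed $v\in L^\infty$, $|W_{1,e}^\mathrm{el}(v,c_\epsk)|\le C(1+|c_\epsk|^2)$), and the second weakly-$\star$ by Lemma \ref{lemma:weakConvergenceEpsilon}(iii), giving $\Phi(z)W_{1,e}^\mathrm{el}(v,c):e(u)$. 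The diagonal term $\Phi(z_\epsk) W_{1,e}^\mathrm{el}(v,c_\epsk):v$ passes by pure dominated convergence. One thus obtains
\begin{equation*}
\liminf_{k\to\infty}\iot\Phi(z_\epsk)W_{1,e}^\mathrm{el}(e(u_\epsk),c_\epsk):e(u_\epsk)\,\mathrm dx\,\mathrm dt \ge \iot\sqrt{\Phi(z)}\theta_{u;c;z}:v\,\mathrm dx\,\mathrm dt + \iot \Phi(z) W_{1,e}^\mathrm{el}(v,c):(e(u)-v)\,\mathrm dx\,\mathrm dt.
\end{equation*}

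The principal obstacle is then the passage $v\to e(u)$: since $e(u)\in L^\infty(0,T;L^p)$ with $p<2$, it is not in $L^\infty$, so I approximate it by the truncations $v_k := e(u)\mathbf{1}_{\{|e(u)|\le k\}}\in L^\infty(\Omega_T)$. Because $\sqrt{\Phi(z)}e(u)\in L^2(\Omega_T)$ by Lemma \ref{lemma:weakConvergenceEpsilon}(iii), dominated convergence gives $\sqrt{\Phi(z)}v_k\to\sqrt{\Phi(z)}e(u)$ in $L^2(\Omega_T)$, hence the first right-hand term converges to $\iot\theta_{u;c;z}:\sqrt{\Phi(z)}e(u)\,\mathrm dx\,\mathrm dt = \iot\sqrt{\Phi(z)}\theta_{u;c;z}:e(u)\,\mathrm dx\,\mathrm dt$. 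For the second, $(e(u)-v_k)$ is supported on $\{|e(u)|>k\}$, where $v_k=0$, so $W_{1,e}^\mathrm{el}(v_k,c)=W_{1,e}^\mathrm{el}(0,c)$ is bounded by $C(|c|^2+1)$; the integrand is then dominated by the integrable function $C\Phi(z)(1+|c|^2)|e(u)|$ (via Cauchy--Schwarz, using $\sqrt{\Phi(z)}e(u)\in L^2$ and an $L^4$ bound on $c$) and converges to $0$ pointwise, so this term vanishes as $k\to\infty$. This establishes the claim. The inequality for the initial data follows by the same argument applied at the fixed time $t=0$, without any time integration and with $z^0$, $c^0$ in place of $z$, $c$, and is in fact simpler since $z^0$ does not vary.
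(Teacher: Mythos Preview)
Your proof of part (i) is essentially identical to the paper's: uniform $L^\infty(0,T;L^2)$ bound via the growth estimate and the a~priori bounds, then weak-$\star$ compactness; the passage from $\sqrt{\Phi(z_\epsk)}W_{1,e}^\mathrm{el}$ to $\Phi(z_\epsk)W_{1,e}^\mathrm{el}$ by multiplying with the strongly convergent factor $\sqrt{\Phi(z_\epsk)}$ is exactly what the paper does.

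For part (ii), however, you take a genuinely different route. The paper exploits assumption \eqref{eqn:W1growthEst4a} (positive $1$-homogeneity of $h_c(\cdot)=W_{1,e}^\mathrm{el}(\cdot,c)-W_{1,e}^\mathrm{el}(0,c)$): this allows one to pull the factor $\sqrt{\Phi}$ \emph{inside} the argument, i.e.\ $\sqrt{\Phi(z)}h_{c_\epsk}(e(u))=h_{c_\epsk}(\sqrt{\Phi(z)}e(u))$, so that the monotonicity \eqref{eqn:W1growthEst3} can be applied directly at the two points $\sqrt{\Phi(z)}e(u)$ and $\sqrt{\Phi(z_\epsk)}e(u_\epsk)$, both of which already live in $L^2(\Omega_T)$. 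This sidesteps entirely the problem that $e(u)$ is only in $L^p$ with $p<2$. You instead run a standard Minty argument with a bounded comparison field $v$, pass to the limit term by term (using the splitting $\Phi=\sqrt{\Phi}\cdot\sqrt{\Phi}$ to pair strong and weak factors), and then recover $e(u)$ via the truncations $v_k=e(u)\mathbf 1_{\{|e(u)|\le k\}}$; the residual term is handled by noting that on $\{|e(u)|>k\}$ only $W_{1,e}^\mathrm{el}(0,c)$ appears, and the resulting integrand $\Phi(z)(1+|c|^2)|e(u)|$ is in $L^1(\Omega_T)$ because $\sqrt{\Phi(z)}e(u)\in L^2$ and $c\in L^4$. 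Your argument is correct and has the merit of not using the homogeneity hypothesis \eqref{eqn:W1growthEst4a} at all, so it would apply to a slightly larger class of elastic energies $W_1^\mathrm{el}$; the paper's route, on the other hand, is shorter and avoids the truncation step by making the $L^2$-framework available from the outset.
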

\begin{proof}
  To (i):  Since 
  \begin{equation*}
    \|  \sqrt{\Phi(z_\epsk)}   W^\mathrm{el}_{1,e} ( e(u_\epsk), c_\epsk) \|_{  L^\infty(0,T;L^{2}(\Omega;\mathbb R^{n\times n} )) }  \le C 
  \end{equation*}
  there exists some $\theta_{u;c;z} \in L^{\infty}(0,T;L^{2}(\Omega;\mathbb R^{n\times n} ))$ such that
  \begin{equation*}
    \sqrt{\Phi(z_\epsk)}  W^\mathrm{el}_{1,e} ( e(u_\epsk), c_\epsk)
    \stackrel{\star}{\rightharpoonup}    \theta_{u;c;z} \qquad \text{ in } 
    L^\infty(0,T;L^{2}(\Omega;\mathbb R^{n \times n}) ) .
  \end{equation*}
  In consequence, 
  \begin{equation*}
    \Phi(z_\epsk) \, W^\mathrm{el}_{1,e} ( e(u_\epsk), c_\epsk)
    \stackrel{\star}{\rightharpoonup}  \sqrt{\Phi(z)} \,   \theta_{u;c;z} \qquad
    \text{in } 
    L^\infty(0,T;L^{2}(\Omega;\mathbb R^{n \times n}) ) .
  \end{equation*}
  In the same way, we obtain the result for 
  $  \sqrt{\Phi(z^0)} W_{1,e}^\mathrm{el}  (e(u^0_\epsk),c^0) $.\\
  To (ii): Since 
  $ h_{\hat c} (\cdot) = W^\mathrm{el}_{1,e} (\cdot, \hat c) - W^\mathrm{el}_{1,e} (0, \hat c) $ 
  is one homogeneous we obtain by means of the uniform convexity assumption  
  \begin{equation*}
    \begin{split}
      \Big( 
      &\sqrt{\Phi(z)}  \big(  W^\mathrm{el}_{1,e} ( e(u), c_\epsk)  
      - W^\mathrm{el}_{1,e} ( 0, c_\epsk)  \big) 
      -  \sqrt{\Phi(z_\epsk)}  \big( W^\mathrm{el}_{1,e} ( e(u_\epsk), c_\epsk) 
      -W^\mathrm{el}_{1,e} ( 0, c_\epsk)  \big) \Big)\\
      & \hspace{9.3cm}:\big(\sqrt{ \Phi(z)}  \, e(u) -   \sqrt{\Phi(z_\epsk)} \,  e(u_\epsk)
      \big)\,  \\
      & = \Big(  \big(  W^\mathrm{el}_{1,e} (\sqrt{\Phi(z)}  e(u), c_\epsk)  
      - W^\mathrm{el}_{1,e} ( 0, c_\epsk)  \big) 
      -   \big( W^\mathrm{el}_{1,e} (   \sqrt{\Phi(z_\epsk)}   e(u_\epsk), c_\epsk) 
      -W^\mathrm{el}_{1,e} ( 0, c_\epsk)  \big) \Big) \\
      & \hspace{9.3cm}:\big(\sqrt{ \Phi(z)}  \, e(u) -   \sqrt{\Phi(z_\epsk)} \,  e(u_\epsk)
      \big)\,  \\
      &= C \big|  \sqrt{ \Phi(z)}  \, e(u) -   \sqrt{\Phi(z_\epsk)} \,  e(u_\epsk)  \big|^2 \ge 0 
    \end{split}
\end{equation*}
  Therefore, 
  \begin{equation*}
    \begin{split}
      &\liminf_{k \to \infty} \iot \sqrt{\Phi(z_\epsk)} \, W^\mathrm{el}_{1,e} ( e(u_\epsk), c_\epsk) :
      \sqrt{\Phi(z)} e(u) \,\mathrm dx  \mathrm dt
      \le  \liminf_{k \to \infty} \iot \Big( {\Phi(z)}  \, W^\mathrm{el}_{1,e} ( e(u), c_\epsk) : e(u) \\
      &\hspace{1.9cm}-  \sqrt{\Phi(z)}\, W^\mathrm{el}_{1,e} ( e(u), c_\epsk) :
      \sqrt{\Phi(z_\epsk)}\, e(u_\epsk) + {\Phi(z_\epsk)} \, W^\mathrm{el}_{1,e} ( e(u_\epsk), c_\epsk) :
      e(u_\epsk)  \Big) \,\mathrm dx  \mathrm dt
    \end{split}
  \end{equation*}
  since 
  \begin{equation*}
    \lim_{k \to \infty} \iot \Big( \sqrt{\Phi(z)}  \, W^\mathrm{el}_{1,e} ( 0, c_\epsk)  
    -  \sqrt{\Phi(z_\epsk)}  \, W^\mathrm{el}_{1,e} ( 0, c_\epsk)  \Big)
    :\big(\sqrt{ \Phi(z)}  \, e(u) -   \sqrt{\Phi(z_\epsk)} \,  e(u_\epsk)
    \big)\,   \mathrm dx  \mathrm dt=0.
  \end{equation*}
  By (i), Lemma \ref{lemma:weakConvergenceEpsilon}, the growth 
  assumptions on  $ W_{1,e}^\mathrm{el}$ and 
  the generalized Lebesgue's convergence theorem,  
  we can pass to the limit:
  \begin{equation*}
    \begin{split}
      \iot \sqrt{\Phi(z)} \, \theta_{u;c;z} :
      e(u) \,\mathrm dx  \mathrm dt
      \le 
      \liminf_{k \to \infty} \iot 
             {\Phi(z_\epsk)} \, W^\mathrm{el}_{1,e} ( e(u_\epsk), c_\epsk) :
             e(u_\epsk)  \,\mathrm dx  \mathrm dt
    \end{split}
  \end{equation*}
  Analogously, we obtain the second assertion of (ii).
  \ep
\end{proof}
\begin{lemma}
\label{lemma:weakConverge3}
There exist sequences $\{q_\epsk\}$ and $\{q^0_\epsk\}$ with $\epsk \searrow
0$ such that the following properties are satisfied: 
\begin{itemize}
\item[(i)] There exist an $\eta_{u;c} \in L^{\infty}(0,T;L^{p'}(\Omega;\mathbb R^{n\times n}
  ))$ and $\eta^0_{u;c}  \in L^{p'}(\Omega;\mathbb R^{n\times n} )$ with 
  \begin{equation*}
    W^\mathrm{el}_{2,e} ( e(u_\epsk), c_\epsk)
    \stackrel{\star}{\rightharpoonup}    \eta_{u;c} \qquad \text{ in } 
    L^\infty(0,T;L^{p'}(\Omega;\mathbb R^{n \times n}) ) 
  \end{equation*}
and 
\begin{equation*}
  W^\mathrm{el}_{2,e} ( e(u^0_\epsk), c^0)
  \stackrel{}{\rightharpoonup}    \eta^0_{u;c} \qquad \text{ in } 
  L^{p'}(\Omega;\mathbb R^{n \times n}). \hspace{1.3cm} 
\end{equation*}
In particular,
\begin{equation}
  \label{eqn:weakConvergenceW2lemma}
  (1-\Phi(z_\epsk)) W^\mathrm{el}_{2,e} ( e(u_\epsk), c_\epsk)
  \stackrel{\star}{\rightharpoonup}   (1-\Phi(z)) \, \eta_{u;c} \qquad \text{ in } 
  L^\infty(0,T;L^{p'}(\Omega;\mathbb R^{n \times n}) ) 
\end{equation}
and 
\begin{equation}
  (1-\Phi(z^0)) W^\mathrm{el}_{2,e} ( e(u^0_\epsk), c^0)
  \stackrel{}{\rightharpoonup}   (1-\Phi(z^0)) \, \eta_{u;c} \qquad \text{ in } 
  L^{p'}(\Omega;\mathbb R^{n \times n})  .
\end{equation}
Furthermore,
\bes
\begin{split}
  & \liminf_{k \to \infty} 
  \iot  (1-\Phi(z_\epsk)) W^\mathrm{el}_{2,e}(e(u_\epsk),c_\epsk) : 
  e(u_\epsk)  \,\mathrm dx  \mathrm dt \ge 
  \iot   (1- {\Phi(z)})  \, \eta_{u;c} :e(u) \,\mathrm dx  \mathrm dt
\end{split}
\ees
and 
\bes
\begin{split}
  &\liminf_{k \to \infty} 
  \int_\Omega  (1- \Phi(z^0) ) W^\mathrm{el}_{2,e}(e(u^0_\epsk),c^0) : 
  e(u^0_\epsk)  \,\mathrm dx  \ge 
  \int_\Omega   (1- {\Phi(z^0)})  \, \eta^0_{u;c} :e(u^0) \,\mathrm dx  .
\end{split}
\ees
\item[(ii)]
  For any $\zeta \in L^1(0,T; W_D^{1,4}(\Omega; \R^{n }) )$:
  \bes
  \begin{split}
    \lim_{k \to \infty} \iot 
    & \Big( \Phi(z_\epsk) W^\mathrm{el}_{1,e}(e(u_\epsk),c_\epsk) :  \nabla \zeta  \\
    & \hspace{1cm} + (1- \Phi(z_\epsk))  \,W^\mathrm{el}_{2,e}(e(u_\epsk),c_\epsk) : 
    \nabla \zeta +  \epsk   \left| \nabla u_\epsk \right|^2
    \nabla u_\epsk: \nabla \zeta  \Big) \,\mathrm dx  \mathrm dt \\
    &= \iot   \Big( \sqrt{\Phi(z)} \,  \theta_{u;c;z} : 
    \nabla \zeta  + (1- \Phi(z)) \, \eta_{u;c}:
    \nabla \zeta \Big) \,\mathrm dx  \mathrm dt
  \end{split}
  \ees
  For any $\zeta  \in W_D^{1,4}(\Omega; \R^{n } )$:
  \bes
  \begin{split}
    \lim_{k \to \infty} 
    \int_\Omega & \Big( \Phi(z^0) W^\mathrm{el}_{1,e}(e(u^0_\epsk),c^0) : 
    \nabla \zeta  \\
    & \hspace{1cm}
    + (1- \Phi(z^0)) \,
    W^\mathrm{el}_{2,e}(e(u^0_\epsk),c^0) : \nabla \zeta +  \epsk   \left| \nabla u^0_\epsk \right|^2
    \nabla u^0_\epsk: \nabla \zeta  \Big) \,\mathrm dx \\
    &= \int_\Omega   \Big(  \sqrt{\Phi(z^0)} \, \theta^0_{u;c;z} : 
    \nabla \zeta  + (1- \Phi(z^0)) \, \eta^0_{u;c}:
    \nabla \zeta \Big) \,\mathrm dx  
  \end{split}
  \ees
\item[(iii)] For any $\zeta \in L^1(0,T; W_D^{1,p}(\Omega; \R^{n } ))$:
\bes
\begin{split}
  \iot  \Big( \sqrt{\Phi(z)} \, \theta_{u;c;z}:
  \nabla \zeta  + (1- \Phi(z)) \, \eta_{u;c}:
  \nabla \zeta \Big) \,\mathrm dx  \mathrm dt= \iot f \cdot  \zeta \,\mathrm dx  \mathrm dt
\end{split}
\ees
For any $\zeta \in W_D^{1,p}(\Omega; \R^{n } )$:
\bes
\begin{split}
\int_\Omega \Big( \sqrt{\Phi(z)} \, \theta^0_{u;c;z}: \nabla \zeta  + (1-
\Phi(z^0)) \, \eta^0_{u;c}: \nabla \zeta \Big) \,\mathrm dx 
= \int_\Omega f^0 \cdot  \zeta \,\mathrm dx  
\end{split}
\ees
\end{itemize}
\end{lemma}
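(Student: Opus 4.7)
The plan is to prove (i), (ii), (iii) sequentially, with the main work being the Minty--Browder monotonicity argument in part (i); parts (ii) and (iii) then follow by combining the resulting weak-$\star$ limits with Lemma \ref{lemma:weakConvergence3a} and extending by density.

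For the weak compactness in (i), the growth condition \eqref{eqn:growthEst4} applied with $e_1 = 0$, $e_2 = e(u_\epsk)$, together with \eqref{eqn:growthEst2a}, gives $|W^\mathrm{el}_{2,e}(e(u_\epsk), c_\epsk)| \le C(|c_\epsk|^s + |e(u_\epsk)|^{p-1} + 1)$. Since $(p{-}1)p' = p$ and $sp' < p^\star \le 2^\star$ (using $s < n(p{-}1)/(n{-}p)$ and $p < 2$), Lemma \ref{lemma:boundednessEpsilon} together with the Sobolev embedding $H^1 \hookrightarrow L^{2^\star}$ bounds this in $L^\infty(0,T; L^{p'})$; extract $\eta_{u;c}$ as a weak-$\star$ limit along a subsequence, and similarly $\eta^0_{u;c}$ at $t=0$. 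Since $\Phi$ is continuous and bounded and $z_\epsk \to z$ a.e., $1 - \Phi(z_\epsk) \to 1 - \Phi(z)$ strongly in every $L^q(\Omega_T)$, giving the weighted convergence. For the liminf inequality, I will use a Minty--Browder trick: for any $\bar u \in W^{1,\infty}(\Omega; \R^n)$ with $\bar u|_D = b$, the strong monotonicity \eqref{eqn:growthEst3} and $(1-\Phi(z_\epsk)) \ge 0$ give
\begin{equation*}
0 \le \iot (1-\Phi(z_\epsk)) \bigl(W^\mathrm{el}_{2,e}(e(u_\epsk), c_\epsk) - W^\mathrm{el}_{2,e}(e(\bar u), c_\epsk)\bigr) : \bigl(e(u_\epsk) - e(\bar u)\bigr) \, dx \, dt.
\end{equation*}
Expanding into four terms: the two involving the fixed test $e(\bar u) \in L^\infty$ pass to the limit directly via the weak-$\star$ convergence just established; the term with $(1-\Phi(z_\epsk)) W^\mathrm{el}_{2,e}(e(\bar u), c_\epsk)$ converges strongly in $L^{p'}(\Omega_T)$ by continuity of $W^\mathrm{el}_{2,e}(e(\bar u), \cdot)$, the a.e.~convergences $z_\epsk \to z$, $c_\epsk \to c$, and Vitali's theorem (the domination $|W^\mathrm{el}_{2,e}(e(\bar u), c_\epsk)| \le C(|c_\epsk|^s + 1)$ is uniformly integrable in $L^{p'}$ via $sp' < 2^\star$), paired with $e(u_\epsk) \weaklim e(u)$ in $L^p$. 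Taking $\liminf_k$ yields the claim in terms of $\bar u$, and approximating $u$ by a sequence $\bar u_j \to u$ in $W^{1,p}$ with trace $b$ on $D$ (split $u = b + (u-b)$ and mollify the zero-trace part) kills the remainder. The $t=0$ case is identical.

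For (ii), the two energy-gradient terms converge by the weak-$\star$ limits in Lemma \ref{lemma:weakConvergence3a}(i) and part (i) above, paired with $\nabla\zeta \in L^1(0,T; L^4) \hookrightarrow L^1(0,T; L^2) \cap L^1(0,T; L^p)$; the regularization term vanishes since $\epsk |\nabla u_\epsk|^2 \nabla u_\epsk = \epsk^{1/4} (\epsk^{1/4} \nabla u_\epsk)^3$ is of size $O(\epsk^{1/4})$ in $L^\infty(0,T; L^{4/3})$ by Lemma \ref{lemma:boundednessEpsilon}(ii), and pairs with $\nabla\zeta \in L^1 L^4$. For (iii), integrate the regularized balance \eqref{eqn:regular3} in time against $\zeta \in L^1(0,T; W^{1,4}_D)$, apply (ii) to obtain the identity for such $\zeta$, and then extend to $\zeta \in L^1(0,T; W^{1,p}_D)$ by density: both sides define continuous linear functionals on that space, the right-hand side because $f \in L^{p'}$ and the left-hand side by identification with the right-hand side on the dense subspace. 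The $t=0$ statement follows identically from the Euler--Lagrange equation characterizing $u^0_\epsk$.

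The main obstacle is the Minty step in part (i): one must carefully combine the asymmetric weak convergence $e(u_\epsk) \weaklim e(u)$ in $L^p$ with weak-$\star$ $L^{p'}$-limits, which hinges on strong $L^{p'}(\Omega_T)$-convergence of $W^\mathrm{el}_{2,e}(e(\bar u), c_\epsk)$ (requiring the sharp exponent condition $s < n(p{-}1)/(n{-}p)$ via Vitali and Sobolev embedding), together with a trace-preserving $W^{1,\infty}$-approximation of $u$ in $W^{1,p}$ so that the remainder term in the liminf vanishes in the limit $\bar u_j \to u$.
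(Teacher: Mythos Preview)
Your proof is correct and follows essentially the same strategy as the paper: both exploit the monotone structure of $W^\mathrm{el}_2$ for the $\liminf$ inequality in (i), pass to the limit in the regularized force balance for (ii), and extend by density for (iii). The organization of (i) differs slightly. The paper uses convexity of $W^\mathrm{el}_2$ twice with $e(u)$ itself as the comparison strain: first to obtain weak lower semicontinuity of $\int(1-\Phi)W^\mathrm{el}_2$, then in the reverse direction to extract the $\liminf$ inequality for $W^\mathrm{el}_{2,e}:e(u_\epsk)$. You instead run a Minty--Browder argument with a $W^{1,\infty}$ test $\bar u$ and then approximate $u$ by such $\bar u_j$. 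Both routes work, but your extra approximation step is not needed: the key strong convergence $(1-\Phi(z_\epsk))W^\mathrm{el}_{2,e}(e(\bar u),c_\epsk)\to(1-\Phi(z))W^\mathrm{el}_{2,e}(e(\bar u),c)$ in $L^{p'}(\Omega_T)$ already holds with $\bar u=u$ (the bound $|W^\mathrm{el}_{2,e}(e(u),c_\epsk)|^{p'}\le C(|e(u)|^p+|c_\epsk|^{sp'}+1)$ is uniformly integrable in $L^1$), so you could test directly with $e(u)$ as the paper does. The trace condition you impose on $\bar u$ is also superfluous, since the monotonicity inequality requires no boundary information.
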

\begin{proof}
To (i): 
Since 
\begin{equation*}
  \|  W^\mathrm{el}_{2,e} ( e(u_\epsk), c_\epsk) \|_{  L^\infty(0,T;L^{p'}(\Omega;\mathbb R^{n\times n} )) }  \le C 
\end{equation*}
there exists an $\eta_{u;c} \in L^{\infty}(0,T;L^{p'}(\Omega;\mathbb R^{n\times n}))$ such that
\begin{equation*}
  W^\mathrm{el}_{2,e} ( e(u_\epsk), c_\epsk)
  \stackrel{\star}{\rightharpoonup}    \eta_{u;c} \qquad \text{ in } 
  L^\infty(0,T;L^{p'}(\Omega;\mathbb R^{n \times n}) ) .
\end{equation*}
In consequence, 
\begin{equation*}
  (1-\Phi(z_\epsk)) \, W^\mathrm{el}_{2,e} ( e(u_\epsk), c_\epsk)
  \stackrel{\star}{\rightharpoonup} (1-\Phi(z)) \,   \eta_{u;c} \qquad
  \text{in} L^\infty(0,T;L^{p'}(\Omega;\mathbb R^{n \times n}) ) .
\end{equation*}
In the same way, we obtain the result for $ W_{2,e}^\mathrm{el}(e(u^0_\epsk),c^0) $.\\
The convexity condition for  $W^\mathrm{el}_{2}$ implies 
\begin{multline*}
  \iot  (1-\Phi(z_\epsk)) W^\mathrm{el}_{2}(e(u_\epsk),c_\epsk)   
  \,\mathrm dx  \mathrm dt \ge 
  \iot   (1- {\Phi(z_\epsk)}) W^\mathrm{el}_{2}(e(u),c_\epsk)  
  \,\mathrm dx  \mathrm dt \\ + 
  \iot   (1- {\Phi(z_\epsk)}) W^\mathrm{el}_{2,e}(e(u),c_\epsk): (e(u_\epsk)-e(u))  
  \,\mathrm dx  \mathrm dt .
\end{multline*}
Since,  for a suitable sequence,
$(1- {\Phi(z_\epsk)}) W^\mathrm{el}_{2}(e(u),c_\epsk) \to (1- {\Phi(z)}) W^\mathrm{el}_{2}(e(u),c) $ strongly in 
$L^{1}(\Omega_T)$ and  $  (1- {\Phi(z_\epsk)}) W^\mathrm{el}_{2,e}(e(u),c_k)
\to (1- {\Phi(z)}) W^\mathrm{el}_{2,e}(e(u),c) $ strongly in
$L^{p'}(\Omega_T)$ by Lebesgue's generalized convergence theorem, and  
$e(u_\epsk) \stackrel{}{\rightharpoonup} e(u)$ in $L^{p}(\Omega_T)$ we obtain 
\begin{equation}
  \label{eq:convex_W2}
  \liminf_{k \to \infty} 
  \iot  (1-\Phi(z_\epsk)) W^\mathrm{el}_{2}(e(u_\epsk),c_\epsk)   
  \,\mathrm dx  \mathrm dt \ge 
  \iot   (1- {\Phi(z)}) W^\mathrm{el}_{2}(e(u),c) \,\mathrm dx  \mathrm dt.
\end{equation}
From the convexity condition for $ W^\mathrm{el}_{2} $ we further deduce 
\begin{multline}
  \label{eq:1convex_W2_e}
  \iot  (1-\Phi(z_\epsk)) W^\mathrm{el}_{2}(e(u),c_\epsk)   
  \,\mathrm dx  \mathrm dt \ge 
  \iot   (1- {\Phi(z_\epsk)}) W^\mathrm{el}_{2}(e(u_\epsk),c_\epsk)  
  \,\mathrm dx  \mathrm dt \\  
  + \iot   (1- {\Phi(z_\epsk)}) W^\mathrm{el}_{2,e}(e(u_\epsk),c_\epsk): (e(u)-e(u_\epsk))  
  \,\mathrm dx  \mathrm dt .
\end{multline}
Equation \eqref{eq:1convex_W2_e} may be rewritten as 
\begin{multline*}
  \iot  (1-\Phi(z_\epsk)) W^\mathrm{el}_{2,e}(e(u_\epsk),c_\epsk):e(u_\epsk)   
  \,\mathrm dx  \mathrm dt \ge 
  \iot   (1- {\Phi(z_\epsk)}) W^\mathrm{el}_{2}(e(u_\epsk),c_\epsk)  
  \,\mathrm dx  \mathrm dt   \\ 
  -  \iot (1- {\Phi(z_\epsk)}) W^\mathrm{el}_{2}(e(u),c_\epsk)  \,\mathrm dx  \mathrm dt  +
  \iot   (1- {\Phi(z_\epsk)}) W^\mathrm{el}_{2,e}(e(u_\epsk),c_\epsk) : e(u)   \,\mathrm dx  \mathrm dt .
\end{multline*}
Applying the $\liminf$ on both sides and taking \eqref{eq:convex_W2} and
\eqref{eqn:weakConvergenceW2lemma} into account gives 
\begin{equation}
  \begin{split}
    \liminf_{k \to \infty} 
    \iot  (1-\Phi(z_\epsk)) 
    & W^\mathrm{el}_{2,e}(e(u_\epsk),c_\epsk):e(u_\epsk)   
    \,\mathrm dx  \mathrm dt  \\\ge 
    &\liminf_{k \to \infty} 
    \iot   (1- {\Phi(z_\epsk)}) W^\mathrm{el}_{2}(e(u_\epsk),c_\epsk)  
    \,\mathrm dx  \mathrm dt   \\ 
    & -  \iot (1- {\Phi(z)}) W^\mathrm{el}_{2}(e(u),c)  \,\mathrm dx  \mathrm dt  +
    \iot   (1- {\Phi(z)}) \eta_{u;c} :e(u)  
    \,\mathrm dx  \mathrm dt  \\
    \ge & \iot   (1- {\Phi(z)}) \eta_{u;c} :e(u)  
    \,\mathrm dx  \mathrm dt . 
  \end{split}
\end{equation}
By similar arguments, we derive the claim with the initial data.\\
To (ii): Let $\zeta\in L^1(0,T; W_D^{1,4}(\Omega; \R^{n }) )$ be arbitrary. 
By Lemma \ref{lemma:weakConvergenceEpsilon},  Lemma
\ref{lemma:weakConvergence3a} and 
(i), we can pass to the limit in equation \eqref{eqn:regular3}. More
precisely, we obtain 
\begin{align}
  0& =\lim_{k \to \infty} \bigg( \int_{\Omega_{T}} \Big(
  \Phi(z_\epsk)  W_{1,e}^\mathrm{el}  (e(u_\epsk),c_\epsk):e(\zeta) 
  + (1 - \Phi(z_\epsk) ) \, W_{2,e}^\mathrm{el}(e(u_\epsk),c_\epsk):e(\zeta) \Big)
  \,\mathrm
  dx  \mathrm dt  \notag \\
  & \hspace{5.4cm}+  \epsk \iot   \left| \nabla u_\epsk \right|^2
  \nabla u_\epsk: \nabla \zeta  \, \mathrm dx  \mathrm dt
  \bigg) -  \iot  f \cdot \zeta  \, \mathrm dx   \mathrm dt \notag \\
  & =    \iot \Big( \sqrt{\Phi(z)} \, \theta_{u;c;z}: \nabla \zeta + 
  (1 - \Phi(z) ) \, \eta_{u;c} : \nabla \zeta \Big) \,\mathrm dx   \mathrm dt
  -\iot  f \cdot \zeta  \, \mathrm dx   \mathrm dt \label{eqn:dens}
\end{align} 
by noticing
\begin{align*}
  \left|\iot   \epsk|\nabla u_\eps|^2 \nabla u_\epsk:\nabla \zeta\,\mathrm dx \mathrm dt\right|
  \leq \epsk \| u_\epsk\|_{L^\infty(0,T; W^{1,4}(\Omega; \R^{n }) )}^3
  \|\zeta\|_{ L^1(0,T; W^{1,4}(\Omega; \R^{n }) )}\rightarrow 0.
\end{align*}
Now let $\zeta\in  W_D^{1,4}(\Omega; \R^{n })$ be arbitrary. 
By Lemma \ref{lemma:weakConvergenceEpsilon}, Lemma
\ref{lemma:weakConvergence3a}, (i) and the 
fact that $u^0_\eps$ is a minimizer of 
$ \mathcal E_\eps (\, \ldotp, c^0,z^0) - \int_\Omega f \cdot ( \, \ldotp) \, \mathrm dx $ we deduce
\begin{align*}
  \begin{split}
    0& =\lim_{k \to \infty} \bigg( \int_{\Omega} \Big(
    \Phi(z^0)  W_{1,e}^\mathrm{el}  (e(u^0_\epsk),c^0):e(\zeta) 
    + (1 - \Phi(z^0) ) \, W_{2,e}^\mathrm{el}(e(u^0_\epsk),c^0):e(\zeta)  \Big)\,\mathrm
    dx  \\
    & \hspace{5.7cm}+  \epsk \int_\Omega   \left| \nabla u^0_\epsk \right|^2
    \nabla u^0_\epsk: \nabla \zeta  \, \mathrm dx  
    \bigg)  -  \int_\Omega f^0 \cdot \zeta  \, \mathrm dx  \\
    & =    \int_\Omega  \Big( \sqrt{\Phi(z^0)} \, \theta^0_{u;c;z} :  \nabla \zeta  + 
    (1 - \Phi(z^0) ) \, \eta^0_{u;c} : \nabla \zeta  \Big)\,\mathrm dx  
    -\int_\Omega  f^0 \cdot \zeta  \, \mathrm dx \, .
  \end{split}
\end{align*}
To (iii):  
Since $f \in L^\infty(0,T; L^{p'}(\Omega; \R^{n } ))$ and $(1- \Phi(z)) \,  \eta_{u;c} \in 
L^\infty(0,T;L^{p'}(\Omega; \R^{n \times n } ))$ we obtain from
\eqref{eqn:dens} the claim by a density argument. Analogously, we 
derive the second claim for  $q^0_\epsk= (u^0_\epsk,c^0,z^0)$. $\phantom{w}$
\ep
\end{proof}

\begin{lemma}
  \label{lemma:weakConverge4}
  There exist  sequences $\{ q_\epsk\}$ and  $\{ q^0_\epsk \}$  with 
  $ \epsk \searrow 0$ such that 
  \begin{itemize}
  \item[(i)] 
    \begin{samepage}
      \begin{align}
        \lim_{k \to \infty} &\iot {\Phi(z_\epsk)} \, W^\mathrm{el}_{1,e}(e(u_\epsk),c_\epsk)
        :e(u _\epsk) \,\mathrm dx  \mathrm dt= \iot 
        \sqrt{\Phi(z)}  \, \theta_{u;c;z}:  e( u) \,\mathrm dx  \mathrm dt, \label{eqn:auxPoint1} \\
        \lim_{k \to \infty} &\iot (1 -\Phi(z_\epsk)) \, W^\mathrm{el}_{2,e}(e(u_\epsk),c_\epsk)
        :e(u_\epsk)\,\mathrm dx  \mathrm dt = \notag\\ 
        & \hspace{6cm}\iot  (1- \Phi(z)) \, \eta_{u;c}: e( u ) \,\mathrm dx  \mathrm dt\label{eqn:auxPoint2}, \\
        \lim_{k \to \infty} & \iot \epsk | \nabla u_\epsk |^4\,\mathrm dx
        \mathrm dt=0,
      \end{align}
    \end{samepage}
  \item[(ii)]
    \begin{align*}
      \lim_{k \to \infty} 
      &\int_\Omega {\Phi(z^0)} \, W^\mathrm{el}_{1,e}(e(u^0_\epsk),c^0)
      :e(u^0_\epsk) \,\mathrm dx  =
      \int_\Omega \sqrt{\Phi(z^0)}  \ \, \theta^0_{u;c;z}:
      \nabla u^0 \,\mathrm dx ,  \\
      \lim_{k \to \infty} & \int_\Omega (1 -\Phi(z^0)) \,
      W^\mathrm{el}_{2,e}(e(u^0_\epsk),c^0) :e(u^0_\epsk)\,\mathrm dx = \notag
      \int_\Omega (1- \Phi(z^0)) \, \eta^0_{u;c} \,\mathrm dx ,\\
      \lim_{k \to \infty} & \int_\Omega  \epsk | \nabla u^0_\epsk |^4\,\mathrm dx  =0.
    \end{align*}
  \end{itemize}
\end{lemma}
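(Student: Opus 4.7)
My plan is to deduce (i) and (ii) from an \emph{equality} satisfied by the sum of the three terms, which I will then combine with the three separate $\liminf$-inequalities supplied by Lemmas \ref{lemma:weakConvergence3a}(ii) and \ref{lemma:weakConverge3}(i). The key algebraic observation is that if $a_k,b_k,c_k\ge0$ satisfy $\liminf a_k\ge A$, $\liminf b_k\ge B$, $\liminf c_k\ge C$ \emph{and} $\lim(a_k+b_k+c_k)=A+B+C$, then $\limsup a_k\le A$ (because $\limsup a_k\le \lim(a_k+b_k+c_k)-\liminf b_k-\liminf c_k$), and likewise for $b_k$ and $c_k$; hence each sequence converges separately.

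For the dynamic statement (i), I would take the regularized balance of forces \eqref{eqn:regular3} for $q_\epsk$ and test it with the admissible function $\zeta=u_\epsk(t)-b(t)\in W^{1,4}_D(\Omega;\R^n)$, then integrate in $t\in(0,T)$ to obtain
\begin{align*}
\iot\Big(\Phi(z_\epsk)W^\mathrm{el}_{1,e}&+(1-\Phi(z_\epsk))W^\mathrm{el}_{2,e}\Big):e(u_\epsk)\,\mathrm dx\mathrm dt+\epsk\iot|\nabla u_\epsk|^4\,\mathrm dx\mathrm dt \\
&=\iot W^\mathrm{el}_{,e}:e(b)\,\mathrm dx\mathrm dt+\epsk\iot|\nabla u_\epsk|^2\nabla u_\epsk:\nabla b\,\mathrm dx\mathrm dt+\iot f\cdot(u_\epsk-b)\,\mathrm dx\mathrm dt.
\end{align*}
On the right-hand side I can pass to the limit termwise: the first summand converges to $\iot(\sqrt{\Phi(z)}\theta_{u;c;z}+(1-\Phi(z))\eta_{u;c}):e(b)\,\mathrm dx\mathrm dt$ by the weak-$\star$ convergences \eqref{eqn:weakConvergenceW2lemma3a} and \eqref{eqn:weakConvergenceW2lemma} together with $b\in W^{1,1}(0,T;W^{1,\infty})$; the second term vanishes because Lemma \ref{lemma:boundednessEpsilon}(ii) yields $\epsk^{3/4}\||\nabla u_\epsk|^3\|_{L^\infty(0,T;L^{4/3})}\le C$, so the full expression is of order $\epsk^{1/4}\|\nabla b\|_{L^1(0,T;L^4)}\to 0$; the last term converges via the weak-$\star$ convergence $u_\epsk\weakstarlim u$ in $L^\infty(0,T;W^{1,p})\hookrightarrow L^\infty(0,T;L^p)$ together with $f\in W^{1,1}(0,T;L^{p'})$. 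Next, I apply the limit balance of forces, Lemma \ref{lemma:weakConverge3}(iii), with the admissible test function $u-b\in L^\infty(0,T;W^{1,p}_D)$, which rewrites the limit right-hand side precisely as $A+B+0$, where $A$ and $B$ are the targeted limits in \eqref{eqn:auxPoint1}--\eqref{eqn:auxPoint2}. The $\liminf$-inequalities from Lemmas \ref{lemma:weakConvergence3a}(ii) and \ref{lemma:weakConverge3}(i), together with $\epsk|\nabla u_\epsk|^4\ge0$, then force separate convergence by the observation of the first paragraph.

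For the initial-data statement (ii), the argument is identical in spirit. The minimality of $u^0_\epsk$ for $\mathcal E_\epsk(\cdot,c^0,z^0)-\int_\Omega f(0)\cdot(\,\cdot\,)\mathrm dx$ on $W^{1,4}(\Omega;\R^n)$ with prescribed trace $b(0)$ yields the Euler--Lagrange equation, which is precisely the stationary balance of forces at $t=0$. Testing it with $u^0_\epsk-b(0)\in W^{1,4}_D(\Omega;\R^n)$ and passing to the limit as above, using the convergences of the initial data in Lemma \ref{lemma:weakConvergenceEpsilon}(iii), the weak convergence statements in Lemmas \ref{lemma:weakConvergence3a}(i) and \ref{lemma:weakConverge3}(i) for $t=0$, and the limit identity Lemma \ref{lemma:weakConverge3}(iii) for $u^0$, produces the analogous equality $\lim(a_k^0+b_k^0+c_k^0)=A^0+B^0+0$. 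Combining it with the $\liminf$-bounds of Lemma \ref{lemma:weakConvergence3a}(ii) and Lemma \ref{lemma:weakConverge3}(i) for the initial data gives the three claims of (ii).

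The step I expect to require the most care is the vanishing of the $\epsk$-contribution on the right-hand side, since both $|\nabla u_\epsk|^2\nabla u_\epsk$ and $|\nabla u_\epsk|^4$ are only controlled after the rescaling provided by Lemma \ref{lemma:boundednessEpsilon}(ii); this forces me to distribute the $\epsk$-weight appropriately (via Hölder with exponents $4$ and $4/3$) and to use the $W^{1,\infty}$-regularity of $b$ so that the boundary correction term is of order $\epsk^{1/4}$. The rest of the convergences reduce to standard weak-strong pairing arguments enabled by the convergences collected in Lemmas \ref{lemma:weakConvergenceEpsilon}--\ref{lemma:weakConverge3}.
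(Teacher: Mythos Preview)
Your proposal is correct and follows essentially the same route as the paper: test \eqref{eqn:regular3} with $u_\epsk-b$, pass to the limit on the right-hand side using the weak convergences of Lemmas \ref{lemma:weakConvergence3a}(i), \ref{lemma:weakConverge3}(i) and the bound of Lemma \ref{lemma:boundednessEpsilon}(ii), then identify the resulting limit via Lemma \ref{lemma:weakConverge3}(iii), and finally split the sum into three separate limits using the $\liminf$-inequalities of Lemmas \ref{lemma:weakConvergence3a}(ii) and \ref{lemma:weakConverge3}(i). The paper phrases the last step simply as ``the lower semicontinuity of all three terms on the left hand side implies the claim'', which is exactly your algebraic observation; note that your hypothesis $a_k,b_k,c_k\ge 0$ is unnecessary (and not satisfied by the first two terms), but your actual argument via $\limsup a_k\le\lim(a_k+b_k+c_k)-\liminf b_k-\liminf c_k$ does not use it.
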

\begin{proof}
We obtain by \eqref{eqn:regular3} and Lemma \ref{lemma:weakConverge3} (iii)
\begin{equation*}
  \begin{split}
    \lim_{k \to \infty} & \bigg( \int_{\Omega_{T}}\Big(
    \Phi(z_\epsk)  W_{1,e}^\mathrm{el}  (e(u_\epsk),c_\epsk):e(u_\epsk -b)  \\
    & + (1 - \Phi(z_\epsk) ) \, W_{2,e}^\mathrm{el}(e(u_\epsk),c_\epsk):e(u_\epsk-b) \Big)\,\mathrm
    dx  \mathrm dt  +   \iot   \epsk \left| \nabla u_\epsk \right|^2
    \nabla u_\epsk: \nabla (u_\epsk -b)  \, \mathrm dx  \mathrm dt\bigg) \\
    & =\lim_{k \to \infty}  \iot  f \cdot ( u_\epsk -b ) \, \mathrm dx   \mathrm dt
    =   \iot  f \cdot ( u -b ) \, \mathrm dx   \mathrm dt\\
    &   =  \iot  \sqrt{\Phi(z) } \theta_{u;c;z}: \nabla (u -b)   +(1 - \Phi(z) )  \,
    \eta_{u;c}   : \nabla (u -b)   \,\mathrm dx   \mathrm dt \\
    &   =  \iot  \sqrt{\Phi(z) } \theta_{u;c;z}: e (u -b)  
    +(1 - \Phi(z) )  \, \eta_{u;c}  : e (u -b)   \,\mathrm dx   \mathrm dt. 
  \end{split}
\end{equation*}
Because of Lemma \ref{lemma:boundednessEpsilon} (ii), Lemma \ref{lemma:weakConvergenceEpsilon}, 
Lemma \ref{lemma:weakConvergence3a} and Lemma \ref{lemma:weakConverge3} 
\begin{equation*}
  \begin{split}
    &\lim_{k \to \infty}  \int_{\Omega_{T}}
    \Phi(z_\epsk)  W_{1,e}^\mathrm{el}  (e(u_\epsk),c_\epsk):e(b)  \, \mathrm dx  \mathrm dt
    = \int_{\Omega_{T}} \sqrt{\Phi(z) }\, \theta_{u;c;z}:e(b)  \, \mathrm dx  \mathrm dt,\\
    & \lim_{k \to \infty}\int_{\Omega_{T}} (1 - \Phi(z_\epsk) ) \, 
    W_{2,e}^\mathrm{el}(e(u_\epsk),c_\epsk):e(b)\,\mathrm dx  \mathrm dt  
    = \int_{\Omega_{T}} (1 - \Phi(z) ) \, 
    \eta_{u;c}: \nabla b \,\mathrm dx  \mathrm dt ,\\
    & \lim_{k \to \infty}  \iot   \epsk \left| \nabla u_\epsk \right|^2
    \nabla u_\epsk: \nabla b  \, \mathrm dx  \mathrm dt =0.
  \end{split}
\end{equation*}
Hence,
\begin{equation} \label{eqn:lower}
  \begin{split}
    \lim_{k \to \infty} & \Big( \int_{\Omega_{T}} \Big(
    \Phi(z_\epsk) W_{1,e}^\mathrm{el}(e(u_\epsk),c_\epsk):e(u_\epsk) 
    + (1 - \Phi(z_\epsk) ) \, W_{2,e}^\mathrm{el}(e(u_\epsk),c_\eps):e(u_\epsk)
    \Big)\,\mathrm
    dx  \mathrm dt   \\ 
    & \hspace{9.0686cm} + \iot   \epsk \left| \nabla u_\epsk \right|^4 \,\mathrm dx  \mathrm dt \Big) \\
    &   =  \iot  \Big( \sqrt{\Phi(z) } \, \theta_{u;c;z}:e(u )  
    +(1 - \Phi(z) ) \,  \eta_{u;c}: \nabla u  \Big) \,\mathrm dx   \mathrm dt.
  \end{split}
\end{equation}
The lower semicontinuity of all three terms on the left hand side in
\eqref{eqn:lower} implies the claim. \\
The assertion for $\{ q_\epsk^0 \} $ can be derived by slight modifications. 
\ep
\end{proof}

\begin{lemma} \label{corollary:pointwise}
  There exist  subsequences $\{q_\epsk\}$ and $\{q^0_\epsk\}$ with 
  $\epsk \searrow 0$ such that 
  \begin{itemize}
  \item[(i)]
    \begin{align}
      \label{eqn:con_p_1}
      \sqrt{\Phi(z_\epsk)}  e( u_\epsk)  &\to  \sqrt{\Phi(z)}  e(u)  \hspace{2.1686cm} 
      \text{ in } L^2(\Omega_T; \R^{n\times n}) ,\\[1mm] 
      \label{eqn:con_p_2}
      (1-\Phi(z_\epsk))^{1/p}  e( u_\epsk ) &\to (1- \Phi(z))^{1/p}  e( u )\qquad
      \hspace{0.5cm}  \text{ in } L^p(\Omega_T; \R^{n\times n})  , \\[1mm] 
      \nabla u_\epsk  & \to   \nabla u  \hspace{3.25cm} \ \text{ in } 
      L^p(\Omega_T; \R^{n\times n)} , \notag\\[1mm] 
      \nabla u_\epsk  & \to \nabla u  \qquad  
      \hspace{2.8cm} \text{a.e.} \text{ in }  \Omega_T \notag.
    \end{align}
  \item[(ii)]
    \begin{equation*}
      \begin{split}
        \sqrt{\Phi(z^0)} e( u^0_\epsk ) &\to  \sqrt{\Phi(z^0)} e( u^0 )\qquad
        \hspace{1.28cm}  \text{ in } 
        L^2(\Omega; \R^{n\times n}) ,\\[1mm] 
        (1-\Phi(z^0))^{1/p} e(u^0_\epsk)  &\to (1- \Phi(z^0))^{1/p}  e (u^0)
        \qquad
        \hspace{2mm} \text{ in } L^p(\Omega; \R^{n\times n})  , \\[1mm] 
        \nabla u^0_\epsk  &\to   \nabla u^0 \hspace{3.15cm}
        \hspace{1mm} \text{ in } L^p(\Omega; \R^{n\times n})  , \\[1mm] 
        \nabla u^0_\epsk & \to \nabla u^0 \qquad  \hspace{2.62cm} \text{a.e.} \text{ in }  \Omega_T.\\[1mm]
      \end{split}
    \end{equation*}
  \end{itemize}
\end{lemma}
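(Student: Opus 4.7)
The plan is to establish the two weighted strong convergences \eqref{eqn:con_p_1} and \eqref{eqn:con_p_2} by combining the uniform convexity of $W^{\mathrm{el}}_1$ and the $p$-uniform convexity of $W^{\mathrm{el}}_2$ with the sharp limit identities in Lemma \ref{lemma:weakConverge4}, and then to deduce a.e.\ convergence of $e(u_\epsk)$ and $L^p$-convergence of the full gradient $\nabla u_\epsk$ by a generalized dominated-convergence argument plus Korn's inequality. Part (ii) is obtained by the same three-stage scheme applied to the elliptic minimization that characterizes $u^0_\epsk$ and $u^0$.

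For \eqref{eqn:con_p_2} I apply \eqref{eqn:growthEst3} with $e_1=e(u_\epsk)$, $e_2=e(u)$, $c=c_\epsk$, multiply by the non-negative weight $1-\Phi(z_\epsk)$ and integrate:
\[
C\iot(1-\Phi(z_\epsk))|e(u_\epsk)-e(u)|^p\,\mathrm dx\,\mathrm dt\le R_k,
\]
where $R_k$ splits into four integrals. The diagonal term has limit $\iot(1-\Phi(z))\,\eta_{u;c}:e(u)$ by Lemma \ref{lemma:weakConverge4}\,(i); the off-diagonal pieces are controlled by the weak-$\star$ convergence $(1-\Phi(z_\epsk))W^{\mathrm{el}}_{2,e}(e(u_\epsk),c_\epsk)\weakstarlim (1-\Phi(z))\eta_{u;c}$ in $L^\infty(0,T;L^{p'})$ from Lemma \ref{lemma:weakConverge3}\,(i), and by the strong $L^{p'}$-convergence of $(1-\Phi(z_\epsk))W^{\mathrm{el}}_{2,e}(e(u),c_\epsk)$ (dominated convergence via \eqref{eqn:growthEst4}) paired with $e(u_\epsk)\weaklim e(u)$ in $L^p$. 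The four limits cancel, so $R_k\to 0$, and \eqref{eqn:con_p_2} follows after adding $(1-\Phi(z_\epsk))^{1/p} e(u)\to(1-\Phi(z))^{1/p} e(u)$ in $L^p$ (dominated convergence).

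For \eqref{eqn:con_p_1} I set $\tilde e_k:=\sqrt{\Phi(z_\epsk)}\,e(u_\epsk)$ and $\tilde e:=\sqrt{\Phi(z)}\,e(u)$ and use \eqref{eqn:W1growthEst3} in the form
\[
C|\tilde e_k-\tilde e|^2 \le \bigl(h_{c_\epsk}(\tilde e_k)-h_{c_\epsk}(\tilde e)\bigr):(\tilde e_k-\tilde e),
\]
since subtracting the common $W^{\mathrm{el}}_{1,e}(0,c_\epsk)$ does not alter the right-hand side. The decisive algebraic step is the $1$-homogeneity \eqref{eqn:W1growthEst4a}: it rewrites $h_{c_\epsk}(\tilde e_k)=\sqrt{\Phi(z_\epsk)}\,h_{c_\epsk}(e(u_\epsk))$ and $h_{c_\epsk}(\tilde e)=\sqrt{\Phi(z)}\,h_{c_\epsk}(e(u))$, so that the diagonal piece reassembles into $\iot\Phi(z_\epsk)W^{\mathrm{el}}_{1,e}(e(u_\epsk),c_\epsk):e(u_\epsk)$ up to the lower-order term $-\iot\Phi(z_\epsk)W^{\mathrm{el}}_{1,e}(0,c_\epsk):e(u_\epsk)$. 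The former has its limit supplied by Lemma \ref{lemma:weakConverge4}\,(i); the latter and the three cross pieces are handled by the weak-$\star$ convergence $\sqrt{\Phi(z_\epsk)}W^{\mathrm{el}}_{1,e}(e(u_\epsk),c_\epsk)\weakstarlim\theta_{u;c;z}$ in $L^\infty(0,T;L^2)$ from Lemma \ref{lemma:weakConvergence3a}, the strong $L^2$-convergence of $h_{c_\epsk}(\tilde e)=\sqrt{\Phi(z)}\,h_{c_\epsk}(e(u))$ (once more via $1$-homogeneity and dominated convergence using \eqref{eqn:W1growthEst4}), and the weak $L^2$-convergence of $\tilde e_k$. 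After cancellation, $\tilde e_k\to\tilde e$ in $L^2(\Omega_T)$.

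Finally, along a further subsequence both weighted sequences converge pointwise a.e.\ and $z_\epsk\to z$ a.e.; since $\sqrt{\Phi(z)}$ and $(1-\Phi(z))^{1/p}$ cannot both vanish, one of the weights is eventually bounded away from zero at almost every point, whence $e(u_\epsk)\to e(u)$ a.e.\ in $\Omega_T$. The pointwise bound
\[
|e(u_\epsk)|^p\le C\bigl(|\sqrt{\Phi(z_\epsk)}\,e(u_\epsk)|^p+(1-\Phi(z_\epsk))|e(u_\epsk)|^p\bigr),
\]
derived by distinguishing $\{\Phi(z_\epsk)\ge 1/2\}$ and its complement, dominates $|e(u_\epsk)|^p$ by an $L^1$-convergent sequence (the first summand via $L^2\hookrightarrow L^p$ on $\Omega_T$ together with the previous step, the second by \eqref{eqn:con_p_2}), so the generalized Lebesgue theorem yields $e(u_\epsk)\to e(u)$ in $L^p(\Omega_T)$; the generalized Korn inequality applied to $u_\epsk-b$ with Dirichlet data on $D$ then upgrades this to $\nabla u_\epsk\to\nabla u$ in $L^p$, and passing to one more subsequence delivers the a.e.\ convergence. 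Part (ii) repeats this argument on $\Omega$ with the initial-data statements of Lemmas \ref{lemma:weakConvergence3a} and \ref{lemma:weakConverge4}. The main obstacle lies in the second stage: the careful bookkeeping of the four cross terms and the verification that $\Phi(z_\epsk)W^{\mathrm{el}}_{1,e}(0,c_\epsk)$ lies in the right dual space to pair with $e(u_\epsk)\weaklim e(u)$ in $L^p$; the $1$-homogeneity \eqref{eqn:W1growthEst4a} is the indispensable device absorbing the awkward scalar weight $\sqrt{\Phi(z_\epsk)}$ inside the nonlinear argument of $h_{c_\epsk}$, which is what renders the uniform-convexity estimate useful.
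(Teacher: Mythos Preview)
Your proof follows essentially the same strategy as the paper's: both derive \eqref{eqn:con_p_1} from the uniform convexity \eqref{eqn:W1growthEst3} combined with the $1$-homogeneity \eqref{eqn:W1growthEst4a} (to pull the weight $\sqrt{\Phi}$ inside $h_c$) and the sharp diagonal limit \eqref{eqn:auxPoint1}, and derive \eqref{eqn:con_p_2} from \eqref{eqn:growthEst3} together with \eqref{eqn:auxPoint2} and the weak/strong convergences of Lemma~\ref{lemma:weakConverge3}; the passage to a.e.\ convergence of $e(u_\epsk)$ via the dichotomy $\{\Phi(z)>1/2\}$ versus $\{\Phi(z)\le 1/2\}$, the domination argument for $L^p$-convergence, and the final appeal to Korn's inequality are likewise identical. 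The only cosmetic difference is that the paper isolates the lower-order piece $(\sqrt{\Phi(z)}-\sqrt{\Phi(z_\epsk)})W_{1,e}^{\mathrm{el}}(0,c_\epsk)$ and shows its contribution vanishes directly, whereas you track it as part of the four cross terms; the substance is the same.
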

\begin{proof}
Because of the uniform convexity condition for $ W^\mathrm{el}_{1}$ and the one homogeneity of 
$ h_{\hat c}(\cdot)= W^\mathrm{el}_{1,e} (\cdot, \hat c) - W^\mathrm{el}_{1,e} (0, \hat c) $ we get
\begin{equation*}
  \begin{split}
    &\limsup_{k \to \infty} \iot C |\sqrt{ \Phi(z) }e(u) -  \sqrt{ \Phi(z_\epsk) }\,
    e(u_\epsk)|^2 \,\mathrm dx  \mathrm dt \le  \limsup_{k \to \infty} \iot
    \Big( \sqrt{\Phi(z)}  \, W^\mathrm{el}_{1,e} ( e(u), c_\epsk)  \\
    &\hspace{2.8cm}-  \sqrt{\Phi(z_\epsk)}  \, W^\mathrm{el}_{1,e} ( e(u_\epsk), c_\epsk)  \Big) 
    :\big(\sqrt{ \Phi(z)}  \, e(u) -   \sqrt{\Phi(z_\epsk)} \,  e(u_\epsk) \big)    \,\mathrm dx  \mathrm dt 
  \end{split}
\end{equation*}
as 
\begin{equation*}
  \lim_{k \to \infty} \iot \Big( \sqrt{\Phi(z)}  \, W^\mathrm{el}_{1,e} ( 0, c_\epsk)  
  -  \sqrt{\Phi(z_\epsk)}  \, W^\mathrm{el}_{1,e} ( 0, c_\epsk)  \Big)
  :\big(\sqrt{ \Phi(z)}  \, e(u) -   \sqrt{\Phi(z_\epsk)} \,  e(u_\epsk)
  \big)\,   \mathrm dx  \mathrm dt=0.
\end{equation*}
Since, for a suitable sequence (also denoted by $\{ \eps_k \}$), 
\begin{equation*}
  \begin{split}
    &\lim_{k \to \infty} \iot \sqrt{\Phi(z_\epsk)} \, W^\mathrm{el}_{1,e} ( e(u_\epsk), c_\epsk) :
    \sqrt{\Phi(z)} e(u) \,\mathrm dx  \mathrm dt
    =\iot \sqrt{\Phi(z)} \, \theta_{u;c;z} : e(u) \,\mathrm dx  \mathrm dt,  \\
    &  \lim_{k\to \infty} \iot {\Phi(z)}  \, W^\mathrm{el}_{1,e} ( e(u), c_\epsk) : e(u) 
    \,\mathrm dx  \mathrm dt = \iot {\Phi(z)}  \, W^\mathrm{el}_{1,e} ( e(u),
    c) : e(u) \,\mathrm dx  \mathrm dt ,\\
    &  \lim_{k\to \infty} \iot   \sqrt{\Phi(z)}\, W^\mathrm{el}_{1,e} ( e(u), c_\epsk) :
    \sqrt{\Phi(z_\epsk)}\, e(u_\epsk)\,\mathrm dx  \mathrm dt
    = \iot   \sqrt{\Phi(z)}\, W^\mathrm{el}_{1,e} ( e(u), c) :
    \sqrt{\Phi(z)}\, e(u)\,\mathrm dx  \mathrm dt,\\ 
    &   \lim_{k\to \infty} \iot   {\Phi(z_\epsk)} \, W^\mathrm{el}_{1,e} ( e(u_\epsk), c_\epsk) :
    e(u_\epsk)   \,\mathrm dx  \mathrm dt = \iot \sqrt{\Phi(z)} \, \theta_{u;c;z} : e(u) \,\mathrm dx  \mathrm dt,
  \end{split}
\end{equation*}
we obtain the first assertion.Due to the convexity condition
\eqref{eqn:growthEst3}, Lemma \ref{lemma:weakConvergenceEpsilon},  Lemma
\ref{lemma:weakConverge3} and Lemma \ref{lemma:weakConverge4}, we infer 
\begin{equation*}
  \begin{split}
    \lim_{k \to \infty} & \iot ( 1- \Phi(z_\epsk)) \, | e(u) -e(u_\epsk)|^p
    \,\mathrm dx   \mathrm dt \\
    \quad & \le  \lim_{k \to \infty} \iot ( 1- \Phi(z_\epsk)) \,  
    \big(   W^\mathrm{el}_{2,e} ( e(u),c_\epsk ) -  
    W^\mathrm{el}_{2,e} ( e(u_\epsk),c_\epsk )  \big) :  (e(u) -e(u_\epsk))
    \,\mathrm dx   \mathrm dt \\
    & =  \iot ( 1- \Phi(z)) \,   W^\mathrm{el}_{2,e} ( e(u),c):e(u)  \,\mathrm dx   \mathrm dt 
    -     \iot  ( 1- \Phi(z)) \, \eta_{u;c}:e(u)
    \,\mathrm dx   \mathrm dt\\   
    & \qquad  -  \iot ( 1- \Phi(z)) \,   W^\mathrm{el}_{2,e} ( e(u),c):e(u)
    \,\mathrm dx   \mathrm dt +
    \iot  ( 1- \Phi(z)) \, \eta_{u;c}:e(u)
    \,\mathrm dx   \mathrm dt \\
    &=0.\\
  \end{split}
\end{equation*}
In consequence,
\begin{equation}
\label{eqn:con_p_h}
\big( 1 - \Phi(z_\epsk) \big)^{1/p}| e(u) -e(u_\epsk)| \to 0 \qquad \text{in } L^p(\Omega_T).
\end{equation}
We estimate 
\begin{equation*}
  \begin{split}
    \int_{\Omega_T} \Big| \big( & 1 - \Phi(z_\epsk) \big)^{1/p} e(u_\epsk)  - 
    \big( 1 - \Phi(z) \big)^{1/p}  e(u) \Big|^p \,\mathrm dx   \mathrm dt \\
    &\le \int_{\Omega_T} \Big( \big| \big( 1 - \Phi(z_\epsk) \big)^{1/p}  \big(e(u_\epsk) -
    e(u) \big) \big|  + \big|  \big(
    \big( 1 - \Phi(z_\epsk) \big)^{1/p}  -  \big( 1 - \Phi(z) \big)^{1/p}  \big)
    e(u) \big| \Big)^p \,\mathrm dx   \mathrm dt \\
    & \le  C \int_{\Omega_T}  \big( 1 - \Phi(z_\epsk) \big) \big|  \big(e(u_\epsk) -
    e(u) \big) \big|^p  \,\mathrm dx   \mathrm dt \\ 
    & \hspace{5.3cm} +
    C\int_{\Omega_T} \big| 
    \big( 1 - \Phi(z_\epsk) \big)^{1/p}  -  \big( 1 - \Phi(z) \big)^{1/p}
    \big|^p \big| e(u) \big|^p \,\mathrm dx   \mathrm dt.
  \end{split}
\end{equation*}
The first term on the right hand side converges to zero in view of \eqref{eqn:con_p_h}. 
Since $z_\epsk \to z $ a.e. in $\Omega_T$ for a suitable subsequence, we obtain 
\begin{equation*}
  \int_\Omega \big|  
  \big( 1 - \Phi(z_\epsk) \big)^{1/p}  -  \big( 1 - \Phi(z) \big)^{1/p}
  \big|^p \big| e(u) \big|^p \,\mathrm dx   \mathrm dt \to 0
\end{equation*}
by the generalized Lebesque's convergence theorem and, therefore, equation
\eqref{eqn:con_p_2} follows.
Due to \eqref{eqn:con_p_1}, \eqref{eqn:con_p_2} and  $z_\epsk \to z $ a.e. in
$\Omega$ for a subsequence $\{\eps_k\}$, we may extract a subsequence (still
denoted by $\{\eps_k\}$) such that 
\begin{align}
  \label{eqn:point1}
  \Phi(z_\epsk) \, e( u_\epsk)  &\to \Phi(z)  \, e( u)  \quad \text{ a.e. in }  \Omega_T, \\
\label{eqn:point2}
\big( 1 - \Phi(z_\epsk) \big)^{1/p} e(u_\epsk) &\to \big( 1 - \Phi(z)
\big)^{1/p} e(u) \quad \text{ a.e. in }  \Omega_T.
\end{align}
From \eqref{eqn:point1} we obtain for $\Omega_{1,T}:= \{ (t,x) \in \Omega_T: 
\Phi(z) > \frac{1}{2} \}$ 
$$ e( u_\epsk)  \to  e( u )  \quad \text{ a.e. in }  \Omega_{1,T}.   $$
Similarly, by \eqref{eqn:point2} we get  for   $\Omega_{2,T}:= \{ (t,x) \in \Omega_T: 
\Phi(z) \le \frac{1}{2} \}$ 
$$ e( u_\epsk)  \to  e( u )  \quad \text{ a.e. in }  \Omega_{2,T}.   $$
Since 
\begin{equation*}
  e( u_\epsk)   \le \sqrt{2} \sqrt{\Phi(z_\epsk)}  \, e( u_\epsk)   \qquad
  \text{in } \, \, \Big\{(t,x)  \in \Omega_T: \Phi(z_\epsk) > \frac{1}{2} \Big\}
\end{equation*}
and 
\begin{equation*}
  e( u_\epsk)   \le \sqrt[p]{2} \sqrt[p]{ \big(1-\Phi(z_\epsk) \big)}  \, e(
  u_\epsk)   \qquad \text{in } \, \,  
  \Big\{(t,x)  \in \Omega_T: \Phi(z_\epsk) \le \frac{1}{2} \Big\}
\end{equation*}
we conclude from \eqref{eqn:con_p_1}, \eqref{eqn:con_p_2} and the generalized 
Lebesque's convergence theorem 
\begin{equation}
  \label{eqn:strongpoint} 
  e(u_\epsk) \to e( u)\quad \text{in } L^p(\Omega_T) .
\end{equation}
The generalized Korn's inequality, in turn, implies 
\begin{equation*}
  \qquad \nabla u_\epsk  \to   \nabla u  \hspace{0.425cm} \ \text{ in } 
  L^p(\Omega_T) 
\end{equation*}
and therefore for a subsequence (still denoted by $\{ \epsk \}$): 
\begin{equation}
  \nabla u_\epsk \to \nabla u \quad \text{a.e. in }  \Omega_T.
\end{equation}
By similar arguments, we derive the properties of (ii)  for $\{ q_\epsk^0 \}$.
\ep
\end{proof}
\begin{lemma}
  \label{lemma:weakConvergenceWz}
  Let $\zeta \in H_+^1(\Omega)$. Then  there exists a 
  sequence $\{ q_\epsk\}$ with $ \epsk \searrow 0$ such that for a.e. $s \in [0,T]$
  \begin{equation*}   
    \begin{split}
      \int_{\Omega}  W_{,z}^\mathrm{el}(e(u(s)),c(s) ,z(s) )\zeta \,\mathrm dx  
      \le \liminf_{k \to \infty} \int_{\Omega}  W_{,z}^\mathrm{el}(e(u_\epsk(s)),c_\epsk(s) ,z_\epsk(s) )\zeta \,\mathrm dx  .
\end{split}
\end{equation*}
In addition, $W^\mathrm{el}_{,z}(e(u),c,z)$ in $L^2(0,T; L^1(\Omega))$.
\end{lemma}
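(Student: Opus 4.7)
\bigskip
\noindent\textbf{Proof plan.} The key observation is that
\[
W^\mathrm{el}_{,z}(e,c,z) \;=\; \Phi'(z)\bigl(W_1^\mathrm{el}(e,c)-W_2^\mathrm{el}(e,c)\bigr) \;\ge\;0,
\]
since $\Phi$ is monotonically increasing (so $\Phi'\ge 0$) and $W_1^\mathrm{el}\ge W_2^\mathrm{el}$ by \eqref{eqn:growthEst2}. Together with $\zeta\ge 0$, this makes the integrand non-negative and opens the door to a Fatou-type argument.

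\smallskip
The plan is as follows. First, combining Lemma \ref{lemma:weakConvergenceEpsilon} (a.e.\ convergence of $c_\epsk$ and $z_\epsk$ on $\Omega_T$) with Lemma \ref{corollary:pointwise} ($\nabla u_\epsk\to\nabla u$ a.e.\ in $\Omega_T$), I extract a further subsequence (still denoted $\{\epsk\}$) along which
\[
\bigl(e(u_\epsk),c_\epsk,z_\epsk\bigr)\to\bigl(e(u),c,z\bigr) \quad\text{a.e.\ in }\Omega_T.
\]
By the $C^1$-regularity of $\Phi$, $W_1^\mathrm{el}$ and $W_2^\mathrm{el}$, this yields
\[
W^\mathrm{el}_{,z}\bigl(e(u_\epsk),c_\epsk,z_\epsk\bigr)\,\zeta \;\longrightarrow\; W^\mathrm{el}_{,z}\bigl(e(u),c,z\bigr)\,\zeta
\quad\text{a.e.\ in }\Omega_T.
\]
A Fubini argument then shows that, outside a time-null set, this pointwise convergence holds a.e.\ in $\Omega$ at each fixed $s$. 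Since the integrand is non-negative, Fatou's lemma applied in $x$ at each such $s$ delivers the asserted liminf inequality.

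\smallskip
For the second assertion, I need a uniform bound on $W^\mathrm{el}_{,z}\bigl(e(u_\epsk),c_\epsk,z_\epsk\bigr)$ in $L^2(0,T;L^1(\Omega))$. Here I exploit the damage variational inequality \eqref{eqn:regular4}: testing it with the admissible constant function $\zeta\equiv -1\in H^1_-(\Omega)$ and rearranging gives, for a.e.\ $s\in[0,T]$,
\[
\int_\Omega W^\mathrm{el}_{,z}\bigl(e(u_\epsk(s)),c_\epsk(s),z_\epsk(s)\bigr)\,\mathrm dx \;\le\; \alpha\,|\Omega| + \beta\int_\Omega |\partial_t z_\epsk(s)|\,\mathrm dx,
\]
where I used $\partial_t z_\epsk\le 0$. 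Cauchy--Schwarz and Lemma \ref{lemma:boundednessEpsilon}(vii) then bound the right-hand side in $L^2(0,T)$ uniformly in $\epsk$. Applying the liminf inequality established above with $\zeta\equiv 1\in H^1_+(\Omega)$ at each $s$ and then Fatou once more in $s$ transfers the bound to the limit:
\[
\int_0^T\!\Bigl(\int_\Omega W^\mathrm{el}_{,z}\bigl(e(u),c,z\bigr)\,\mathrm dx\Bigr)^{\!2}\mathrm ds
\;\le\;\liminf_{k\to\infty}\int_0^T\!\Bigl(\int_\Omega W^\mathrm{el}_{,z}\bigl(e(u_\epsk),c_\epsk,z_\epsk\bigr)\,\mathrm dx\Bigr)^{\!2}\mathrm ds \;\le\; C.
\]

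\smallskip
The main subtlety is a bookkeeping one: the subsequences given by Lemma \ref{lemma:weakConvergenceEpsilon} and Lemma \ref{corollary:pointwise} must be threaded together so that a single extracted subsequence delivers the a.e.\ convergence of \emph{all three} variables on $\Omega_T$; no PDE compactness beyond what has already been established is needed. Sign-preservation (from $\Phi'\ge 0$, $W_1^\mathrm{el}\ge W_2^\mathrm{el}$ and $\zeta\ge 0$) is what allows Fatou to replace strong convergence of $W^\mathrm{el}_{,z}$, which is not available.
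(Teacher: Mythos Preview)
Your proof is correct and follows essentially the same route as the paper: pointwise convergence of $(e(u_\epsk),c_\epsk,z_\epsk)$ from the earlier lemmas, Fatou's lemma in $x$ for the liminf inequality, then testing the damage variational inequality \eqref{eqn:regular4} with $\zeta=-1$ and a second Fatou in $s$ for the $L^2(0,T;L^1(\Omega))$ bound.

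One small difference worth noting: the paper introduces a shift function $g(c,z)=\Phi(z)C_2(|c|^4+1)+(1-\Phi(z))C_2(|c|^{sp'}+1)$ and argues that $W_{,z}^\mathrm{el}+g\ge 0$ via the coercivity estimates \eqref{eq:cons_ass}, then applies a generalized Fatou lemma where the lower bound $-g$ converges in $L^1$. Your direct observation $W_{,z}^\mathrm{el}=\Phi'(z)(W_1^\mathrm{el}-W_2^\mathrm{el})\ge 0$ (from $\Phi'\ge 0$ and \eqref{eqn:growthEst2}) makes this shift unnecessary and yields a cleaner argument; the paper's extra term $g$ is harmless but redundant under the standing assumptions.
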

\begin{proof} 
We abbreviate 
$$g(c,z):=  \Phi(z)\, C_2 \big( |c|^4 +1 \big)  +
\big( 1 -\Phi(z) \big) C_2 \big(|c|^{sp'} +1 \big) .$$
Note that due to \eqref{eq:cons_ass}
\begin{equation*}
  W_{,z}^\mathrm{el}(e(u),c,z) +g(c,z)  \ge 0 \,. 
\end{equation*}
In addition,
$$z_\epsk \to z, \qquad c_\epsk \to c \qquad  \text{and} 
\quad \nabla u_\epsk \to \nabla u \qquad a.e.~ \text{ in  }
\Omega_T$$   
for a subsequence as $\epsk \to 0$ and for a.e. $s \in [0,T]$
\begin{equation*}
  \begin{split}
    &\int_\Omega \big| g(c_\epsk(s),z_\epsk(s))  \big| \,\mathrm dx   \to 
    \int_\Omega \big| g(c(s),z(s)) \big|   \,\mathrm dx.\\
\end{split}
\end{equation*}
Therefore, we obtain the first assertion by Fatou's lemma. \\ 
Moreover, the first assertion combined with \eqref{eqn:regular4} tested by $\zeta=-1$ yields for a.e. $s \in[0,T]$:
\bes
\begin{split}
  \int_{\Omega}  \big|  W_{,z}^\mathrm{el} &(e(u(s)),c(s) ,z(s) )  + 
  g(c(s),z(s)) \big|\,\mathrm dx  \\
  & \le \liminf_{k \to \infty} \int_{\Omega}  W_{,z}^\mathrm{el}(e(u_\epsk(s)),c_\epsk(s) ,z_\epsk(s)) \,\mathrm dx
  + \int_{\Omega}   g(c(s),z(s))   \,\mathrm dx  \\
  &  \le  \liminf_{k \to \infty}   \int_{\Omega} (\alpha -\beta \partial_t z_\epsk(s) ) \,\mathrm dx
  + \int_{\Omega}   g(c(s),z(s))    \,\mathrm dx   \\
  &  \le  C       \Big( \liminf_{k \to \infty}    \|  \partial_t  z_\epsk (s)  \|_{
    L^{1}(\Omega)}  +  \|   g(c(s),z(s))  \|_{  L^{1}(\Omega)}  + 1  \Big) 
\end{split}
\ees
Hence, we obtain by Lemma \ref{lemma:boundednessEpsilon} (vii) and Fatou's lemma  
\begin{equation*}
  \begin{split}
    || W_{,z}^\mathrm{el} (e(u),c ,z )||_{L^2(0,T;L^1(\Omega))} &\le
    C \Big(  \liminf_{k\to \infty}  \|  \partial_t  z_\epsk  \|_{L^2(0,T;L^{1}(\Omega)) } 
    + \|   g(c,z)  \|_{L^2(0,T; L^{1}(\Omega))}    +1  \Big) \\
    & \le C < \infty
  \end{split}
\end{equation*}
and the second assertion follows.
\ep
\end{proof}\\[2mm] 
\begin{proof}[Proof of Theorem \ref{theorem:mainTheorem}]
  We establish items (i)-(v) of Definition
  \ref{def:weakSolutionLimit}.
  \begin{enumerate}
    \renewcommand{\labelenumi}{(\roman{enumi})}
  \item These space and regularity properties immediately follow from Lemma 
    \ref{lemma:weakConvergenceEpsilon}.
  \item
    Let $\zeta\in L^2(0,T;H^1(\Omega;\mathbb R^N))$ with 
    $\partial_t\zeta\in L^2(\Omega_T;\mathbb R^N)$ and $\zeta(T)=0$. Then,
    equation  \eqref{eqn:regular1} can be rewritten as 
    \begin{align}
      &\int_{\Omega_T}(c_{\eps_k}-c^0)\cdot\partial_t\zeta\,\mathrm dx \,  \mathrm dt
      =\int_{\Omega_T}  \mathbb{M} \nabla w_{\eps_k} :   \nabla \zeta \,  \mathrm dx
      \, \mathrm dt \, .
    \end{align}
    In view of Lemma \ref{lemma:weakConvergenceEpsilon}, we can pass to the limit and obtain 
    \eqref{eqn:limit1}. \\
    Now, let $\zeta\in L^2(0,T;H^1(\Omega;\mathbb R^N))\cap L^\infty(\Omega_T;\mathbb R^N)$.
    Integration from $t=0$ to $t=T$ of equation \eqref{eqn:regular2}  yields
    \begin{align}
      \label{eqn:auxW1}
      \begin{split}
        \int_{\Omega_T}w_\epsk & \cdot\zeta\,\mathrm dx\mathrm dt 
        =\int_{\Omega_T}\mathbb P\mathbf\Gamma\nabla c_\epsk:\nabla\zeta\,\mathrm dx\mathrm dt  \\
        &+ \int_{\Omega_T}\ (\mathbb P W_{,c}^\mathrm{ch}(c_\epsk) +\mathbb P
        W_{,c}^\mathrm{el}(e(u_\epsk),c_\epsk,z_\epsk))\cdot\zeta\,\mathrm dx\mathrm dt \, .
      \end{split}
    \end{align}
    Due to Lemma \ref{lemma:weakConvergenceEpsilon}, the growth conditions  
    for $W_{,c}^\mathrm{ch}$ and  $W_{,c}^\mathrm{el}$, Lemma 
    \ref{corollary:pointwise} and the generalized Lebesgue's convergence
    theorem, we can pass to the limit in \eqref{eqn:auxW1}: 
    \begin{align*}
      \int_{\Omega_T}w\cdot\zeta\,\mathrm dx\mathrm dt
      =\int_{\Omega_T}\mathbb P\mathbf\Gamma\nabla c:\nabla\zeta+(\mathbb P W_{,c}^\mathrm{ch}(c)
      +\mathbb P W_{,c}^\mathrm{el}(e(u),c,z))\cdot\zeta\,\mathrm dx\mathrm dt.
    \end{align*}
    Hence, we obtain for a.e. $t \in[0,T]$ equation \eqref{eqn:limit2}.
  \item This is a direct 
    consequence of Lemma \ref{lemma:weakConverge3} (ii) and (iii), 
    Lemma \ref{corollary:pointwise} and the generalized Lebesgue's convergence theorem. 
  \item From Lemma \ref{lemma:weakConvergenceEpsilon} and Lemma 
    \ref{lemma:weakConvergenceWz}, we infer the damage variational
    inequality \eqref{eqn:limit4}. The inequalities \eqref{eqn:limit4a} and \eqref{eqn:limit4b} are obvious due
    to Lemma \ref{lemma:weakConvergenceEpsilon}. 
  \item 
    Weakly semi-continuity arguments lead to
    \begin{align*}
      &\liminf_{k \to \infty }\Big(\mathcal E_\epsk(u_\epsk(t),c_\epsk(t),z_\epsk(t)) +\int_{\Omega_t}
      \alpha|\partial_t z_\epsk|+\beta |\partial_t z_\epsk|^2  +        |\nabla
      w_\epsk |^2 \, \mathrm dx \mathrm ds \Big)\\
      &\qquad\qquad\geq\mathcal E(u(t),c(t),z(t))
      +\int_{\Omega_t} \alpha|\partial_t z|+\beta |\partial_t z|^2 +    |\nabla
      w |^2  \,\mathrm dx \mathrm ds.
    \end{align*}
    Due to Lemma  \ref{corollary:pointwise} and $\lim_{ k \to \infty}
    \int_{\Omega} \epsk 
    |\nabla u^0_{\epsk}|^4   \,\mathrm dx=0$ we can pass to the limit in  
    \eqref{eqn:regular5} and obtain   \eqref{eqn:limit5}.
    \ep
  \end{enumerate}
\end{proof}

\addcontentsline{toc}{chapter}{Bibliography}{\footnotesize{\setlength{\baselineskip}{0.2 \baselineskip}
    \bibliography{references}}
  \bibliographystyle{alpha}}
\end{document}